\theoremstyle{thmit} % Numbered and Italic
\newtheorem{thm}{Theorem}[section]
\newtheorem{lem}[thm]{Lemma}
\theoremstyle{thmrm} % Numbered and Roman
\newtheorem*{conjecture}{Conjecture}
\newtheorem*{oldproof}{Proof}
\renewenvironment{proof}[1][{}]{\begin{oldproof}[#1]}{\qed\end{oldproof}}
\def\sR{\mathcal{R}}
\def\OO{\widetilde{O}}
\def\nicebreak{\vskip 0pt plus 50pt\penalty-300\vskip 0pt plus -50pt } % Welcome a page break here, but leave no trace if it doesn't break.
\def\e{\ell}
\def\nts{\negthickspace}
\def\c{\theta}
\def\d{\delta}
\def\ep{\varepsilon}
\def\l{\lambda}
\def\Z{\mathbb{Z}}
\def\R{\mathbb{R}}
\def\sG{\mathcal{G}}
\def\M{\mathcal{M}}
\def\md#1{\hat{#1}}
\def\mdA{\md{A}}
\def\mdB{\md{B}}
\def\mdC{\md{C}}
\def\mdD{\md{D}}
\def\mdE{\md{E}}
\def\mdF{\md{F}}
\def\mdG{\md{G}}
\def\mdH{\md{H}}
\def\mdI{\md{I}}
\def\mdZ{\md{Z}}
\def\Im{\operatorname{Im}}
\def\zvec{\boldsymbol{z}}
\def\cvec{\boldsymbol{\theta}}
\def\sumjk{\sum_{1\le j<k\le n}}
\def\sumj{\sum_{j=1}^{n}}
\def\sumk{\sum_{k=1}^{n}}
\def\prodjk{\prod_{1\le j<k\le n}}
\def\prodj{\prod_{j=1}^{n}}
\def\({\bigl(}
\def\){\bigl)}
\def\sRc{{\sR^c}}
\def\sRo{{\sR_0}}
\def\sRpi{{\sR_\pi}}
\def\sRz{{\sR_{\zvec}}}
\def\sP{\mathcal{P}}
\def\sS{\mathcal{S}}
\def\sG{\mathcal{G}}
\def\Prob{\operatorname{Prob}}
\providecommand{\abs}[1]{\lvert#1\rvert} % Corrects spacing problems associated with |-x|.
\def\dfrac#1#2{\lower0.15ex\hbox{\large$\frac{#1}{#2}$}} % B's dfrac command for tricky fracs
\begin{document}

%--------------------------------------------------------------------------------------------
% Title page
%--------------------------------------------------------------------------------------------
\title{Asymptotic enumeration of symmetric integer matrices with
uniform row sums}

\author	{Brendan~D.~McKay\\[5pt]
                                        \normalsize Research School of Computer Science\\[-3pt]
                                        \normalsize Australian National University\\[-3pt]
                                        \normalsize Canberra ACT 0200, Australia\\[5pt]
                                        \normalsize \ttfamily{bdm@cs.anu.edu.au}\\ \\
				Jeanette~C.~McLeod\\[5pt]
                                        \normalsize Department of Mathematics and Statistics\\[-3pt]
                                        \normalsize University of Canterbury\\[-3pt]
                                        \normalsize Christchurch 8140, New Zealand\\[5pt]
                                        \normalsize \ttfamily{jeanette.mcleod@canterbury.ac.nz}
}
	
\date{} 

\maketitle

%--------------------------------------------------------------------------------------------
% Abstract
%--------------------------------------------------------------------------------------------
\begin{abstract}
  We investigate the number of symmetric matrices of non-negative
  integers with zero diagonal such that each row sum is the same. Equivalently,
  these are zero-diagonal symmetric contingency tables with uniform
  margins, or loop-free regular multigraphs. We determine the
  asymptotic value of this number as the size of the matrix tends to
  infinity, provided the row sum is large enough. We conjecture that
  one form of our answer is valid for all row sums.
 \end{abstract}

%--------------------------------------------------------------------------------------------
% Section 1: Introduction
%--------------------------------------------------------------------------------------------
\section{Introduction}\label{S:intro}

Let $M(n,\e)$ be the number of $n\times n$ symmetric matrices over $\{0,1,2,\dots\}$ with zeros on the main diagonal and each row summing to $\e$.
Our interest is in the asymptotic value of $M(n,\ell)$ as $n\to\infty$ with $\ell$ being a function of~$n$.
Alternative descriptions of the class $M(n,\e)$ are: adjacency matrices of loop-free regular multigraphs of order $n$ and degree~$\e$, and zero-diagonal symmetric contingency tables of dimension $n$ with uniform margins equal to~$\e$. An example appears in Figure~\ref{F:example}.

\begin{figure}[t] \label{F:example}
\parbox{0.4\textwidth}{
\begin{center}
	$n=9$\\[1em]
	$\hspace{0.6em} \e=20$\\[1em]
	$\hspace{0.6em} \l=\frac{\ell}{n-1}=2.5$
\end{center}
}%
\parbox{0.6\textwidth}{
%\begin{center}
		\renewcommand{\arraystretch}{1}
		\renewcommand{\tabcolsep}{0.3em}
		\begin{tabular}{ccccccccc}
			0&4&1&3&2&2&3&1&4\\  % l= 20
			4&0&4&1&3&2&2&3&1\\
			1&4&0&7&1&0&2&2&3\\
			3&1&7&0&1&1&3&2&2\\
			2&3&1&1&0&7&1&3&2\\
			2&2&0&1&7&0&4&1&3\\
			3&2&2&3&1&4&0&4&1\\
			1&3&2&2&3&1&4&0&4\\
			4&1&3&2&2&3&1&4&0\\
		\end{tabular} 
%\end{center}
}
\caption{An example of a matrix counted by $M(9,20)=1955487489759152410696$.}
\end{figure}

Very little seems to be known about this problem. 
The asymptotic value of $M(n,3)$ was determined by Read in 1958~\cite{Read}.
According to Bender and Canfield~\cite{BC1978}, de Bruijn extended this to
$M(n,\ell)$ for fixed $\ell$ but failed to publish it.
In any case,~\cite{BC1978} generalised the result to bounded but possibly
non-equal row sums. 
%The method of~\cite{GMint} applied to the intermediate results of~\cite{MWsparse} is likely to reveal the asymptotic value of $M(n,\ell)$ when $\ell=o(n^{1/2})$, but this calculation is yet to be performed.
By the method of switchings, Greenhill and McKay~\cite{Sissy} found the asymptotic
number of matrices with given small row sums over a range that includes
$M(n,\ell)$ for $\ell=o(n^{1/2})$.

In this paper we treat the case of large~$\ell$ and manage to find the asymptotics whenever
$\ell>C n/\log n$ for any $C>\tfrac16$.
We will use the multidimensional saddle-point method, which was previously applied successfully to 
the corresponding $\{0,1\}$ problem by McKay and Wormald~\cite{MWreg}
and to the corresponding non-symmetric problem by Canfield and McKay~\cite{CMint}.
For the non-symmetric problem with mixed row and column sums, see
Barvinok and Hartigan~\cite{Barvinok}.

Our theorem is as follows.

\begin{thm}\label{Th:main}
Let $a$ and $b$ be positive real numbers such that $a+b<\tfrac{1}{2}$. Let $\e=\e(n)$ be such that $\e n$ is even and $\lambda=\ell/(n-1)$ satisfies
\begin{equation}\label{E:constant}
	\l\geq \frac{1}{3a\log n}.
\end{equation}
Then as $n\to \infty$, 
\begin{align}
		M(n,\ell) &= \sqrt{2}\,\(2\pi n(1+\l)^{-\e-n+2}\l^{\e+1}\)^{-n/2}
				\exp\biggl(\frac{14\l^2+14\l-1}{12\l(1+\l)}+O(n^{-b})\!\biggr)\notag\\
			&= \biggl(\frac{\lambda^\lambda}{(1+\lambda)^{1+\lambda}}
			            \biggr)^{\!\!\binom{n}{2}}
			       \binom{n+\ell-2}{\ell}^{\!n}
			      \sqrt 2\, e^{3/4}\, \(1+O(n^{-b})\).\label{E:binform}
\end{align}
\end{thm}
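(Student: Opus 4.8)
The plan is to realize $M(n,\ell)$ as a multidimensional contour integral and evaluate it by the saddle-point method. Writing each matrix entry $a_{jk}$ ($j<k$) as a non-negative integer and enforcing the row-sum constraints with variables $z_1,\dots,z_n$ on circles of radius $r$ in the complex plane, one gets
\begin{equation*}
  M(n,\ell) = \frac{1}{(2\pi i)^n}\oint\cdots\oint
    \prod_{1\le j<k\le n}\frac{1}{1-z_jz_k}\,
    \prod_{j=1}^{n}\frac{dz_j}{z_j^{\ell+1}},
\end{equation*}
where the contours are $\abs{z_j}=r$ with $r^2<1$. The generating-function factor $(1-z_jz_k)^{-1}$ comes from summing $\sum_{a\ge0}(z_jz_k)^a$ over the $(j,k)$ entry. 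The first step is to choose $r$ so that the integrand has a genuine saddle on the ``diagonal'' point $z_1=\cdots=z_n=r$: the first-order condition gives $(n-1)\dfrac{r^2}{1-r^2}=\ell$, i.e. $r^2=\lambda/(1+\lambda)$, which is exactly where the leading powers $(1+\lambda)^{-\ell-n+2}\lambda^{\ell+1}$ in \eqref{E:binform} originate. Substituting $z_j=re^{i\theta_j}$ converts the problem to an integral over $\thetavec\in[-\pi,\pi]^n$.

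Next I would split the torus into a small ``central'' box $\abs{\theta_j}\le\delta$ (for a suitable $\delta=n^{-1/2+o(1)}$) and its complement, and show the complement contributes negligibly — this is the standard but delicate ``killing the tail'' step, handled by bounding $\abs{1-r^2e^{i(\theta_j+\theta_k)}}$ from below away from the diagonal and exploiting concentration of $\sum_j\theta_j$ and $\sum_j\theta_j^2$. Within the central region, expand $\log\prod_{j<k}(1-r^2e^{i(\theta_j+\theta_k)})^{-1} - \ell\sum_j i\theta_j$ to second order in $\thetavec$; the quadratic form is, up to the rank-one correction from the constraint, a multiple of $I + \tfrac1n J$-type matrix whose determinant is explicitly computable. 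The Gaussian integral over the central box then produces the factor $\sqrt2\,(2\pi n\,(\cdot))^{-n/2}$, and the cubic/quartic terms in the Taylor expansion, evaluated against the Gaussian measure, produce the constant $\exp\bigl((14\lambda^2+14\lambda-1)/(12\lambda(1+\lambda))\bigr)$ after collecting all contributions of order $1$; everything of smaller order is absorbed into $O(n^{-b})$. The hypothesis $a+b<\tfrac12$ and the lower bound \eqref{E:constant} on $\lambda$ are exactly what is needed to make the error terms in both the tail estimate and the local expansion beat $n^{-b}$; tracking these dependencies carefully is where most of the technical work lies.

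The passage from the first displayed form to \eqref{E:binform} is then pure algebra: one checks that
\begin{equation*}
  \binom{n+\ell-2}{\ell}^{\!n}
  = \Bigl(\text{Stirling}\Bigr)
  = \bigl(2\pi n(1+\lambda)^{-\ell-n+2}\lambda^{\ell+1}\bigr)^{-n/2}
      \cdot\Bigl(\frac{(1+\lambda)^{1+\lambda}}{\lambda^\lambda}\Bigr)^{-\binom n2}
      \cdot \exp\bigl(O(1/\ell)+\cdots\bigr),
\end{equation*}
using $\binom n2 = n(n-1)/2$ and $\ell=\lambda(n-1)$ to match exponents; the exponential prefactor from Stirling's series combines with $\exp\bigl((14\lambda^2+14\lambda-1)/(12\lambda(1+\lambda))\bigr)$ to leave exactly $\sqrt2\,e^{3/4}$, since the $\lambda$-dependent pieces cancel identically. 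I would verify this cancellation symbolically as a consistency check on the constant in the first form.

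The main obstacle I anticipate is not the local Gaussian computation but the \emph{uniform control of the tail integral}: when $\lambda$ is as small as $1/(3a\log n)$, the radius $r^2=\lambda/(1+\lambda)$ is close to $0$, the factors $(1-r^2e^{i\psi})^{-1}$ are only weakly peaked, and showing that the off-diagonal part of the torus is exponentially (or at least polynomially by enough) smaller requires careful estimates that degrade precisely as $\lambda\to0$ — this is what forces the constraint $\lambda\ge 1/(3a\log n)$ with $a<\tfrac12$, and getting the constant $\tfrac13$ (equivalently $C>\tfrac16$) right is the crux. A secondary difficulty is bounding the contribution of the cubic and quartic Taylor terms well enough that their fluctuations, and not just their means, are controlled to within $O(n^{-b})$; this typically needs a moment/cumulant expansion of the logarithm of the integrand rather than a naive term-by-term bound.
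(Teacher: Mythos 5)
Your overall strategy coincides with the paper's (Cauchy's formula for the coefficient of $x_1^\ell\cdots x_n^\ell$ in $\prod_{j<k}(1-x_jx_k)^{-1}$, circles of radius $r$ with $r^2=\lambda/(1+\lambda)$, a local Gaussian analysis plus a tail bound), but one step of your plan fails as stated: the complement of the single central box $\{\abs{\theta_j}\le\delta\}$ is \emph{not} negligible. The modulus of the integrand is maximal whenever every $\theta_j+\theta_k$ is a multiple of $2\pi$, and this has \emph{two} solutions on the torus: all $\theta_j=0$ and all $\theta_j=\pi$. Since $n\ell$ is even, the phase $e^{-i\ell\sum_j\theta_j}$ also equals $1$ at the second point, and the shift $\theta_j\mapsto\theta_j+\pi$ maps one neighbourhood onto the other preserving the integrand; so the neighbourhood of $(\pi,\dots,\pi)$ contributes exactly as much as the neighbourhood of the origin. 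Your tail estimate therefore cannot succeed, and forcing the computation through with one saddle point would return half the correct answer. Concretely, the $\sqrt2$ in the theorem arises as $2\times\tfrac1{\sqrt2}$: a factor $2$ from the two saddle points, and $\tfrac1{\sqrt2}$ from the determinant of the quadratic form $\sum_{j<k}(\theta_j+\theta_k)^2=(n-2)\sum_j\theta_j^2+(\sum_j\theta_j)^2$, whose matrix $(n-2)I+J$ has determinant $2(n-1)(n-2)^{n-1}\sim 2e^{-2}n^n$. Your assertion that the Gaussian integral over the central box alone ``produces the factor $\sqrt2$'' is off by exactly this factor of $2$.

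The second gap is at the opposite end of the range of $\ell$ from where you locate the difficulty. You are right that small $\lambda$ is delicate (this is where the hypothesis $\lambda\ge 1/(3a\log n)$ bites), but the saddle-point tail bound \emph{also} fails when $\lambda$ grows superpolynomially in $n$: the crude volume estimates for the far region are of the form $(2\pi)^n f(2t)^{n^{1+\epsilon}}$ with $f(2t)^{-1}$ bounded, and these must be compared against a main term of order $(\pi/(A_2n))^{n/2}$ with $A_2=\tfrac12\lambda(1+\lambda)$, which forces roughly $\log\lambda=n^{o(1)}$. The paper proves the tail bound only for $\lambda=n^{O(1)}$ and then handles all larger $\ell$ by a completely different mechanism: $M(n,\ell)$ is the Ehrhart quasipolynomial of the polytope of symmetric nonnegative zero-diagonal matrices with unit row sums, whose vertices have half-integer coordinates, so $M(n,\ell)=\sum_{i=0}^d h_{d-i}\binom{\ell+i}{d}$ with nonnegative $h_i$ depending only on the parity of $\ell$; writing $M(n,\ell)=\binom{\ell}{d}\bigl(\sum_i h_i\bigr)\bigl(1+\alpha(n,\ell)/\ell\bigr)$ with $\alpha$ nonnegative and monotone, and calibrating at $\ell=\varTheta(n^5)$ where the analytic formula already holds, propagates the asymptotics to all larger $\ell$. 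This is a genuinely separate idea that your proposal lacks; without it (or a substitute) you only obtain the theorem for $\ell$ bounded by a polynomial in $n$.
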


In Section~\ref{S:integral}, we express $M(n,\ell)$ as an integral
in $n$-dimensional complex space and divide the domain of
integration into three parts, then in Section~\ref{S:eval_integral} we
estimate the integral in two of the parts.
In Section~\ref{S:box}, we show that the third part is negligible in
comparison provided $\ell$ is bounded by a polynomial in~$n$.
We complete the proof for large $\ell$ in Section~\ref{S:proof}
using the theory of Ehrhart quasipolynomials.

In Section~\ref{S:naive}, we show that the form of expression~\eqref{E:binform} is motivated by a na\"\i ve probabilistic model.
We also note that~\eqref{E:binform} agrees with \cite{Sissy}, apart from the error term, when
$1\le\e=o(n ^{1/2})$, and closely matches many exact values computed as described in Section~\ref{S:exact}.  This leads us to suspect that~\eqref{E:binform} is true whenever $\e>0$, and we conjecture explicit bounds for $M(n,\e)$ in Conjecture~\ref{Conj:bold}.

Throughout the paper, asymptotic notation like $O(f(n))$
refers to the passage of $n$ to~$\infty$.
We will also use a modified notation $\OO(f(n))$.
A function $g(n)$ belongs to this class provided that 
\begin{equation*}
	g(n) = O(f(n)n^{a\ep}),
\end{equation*}
for some numerical constant $a$ that might be different at each use of the notation.

%We also use a modified notation $\OO(f(n))$, which is to be taken as a shorthand for $O\(f(n)n^{O(1)\ep}\)$, where the $O(1)$ factor is uniform over~$\ep$ provided $\ep$ is small enough.
    
%Throughout the paper we will use the modified notation $\OO(f(n))$. A function $g(n)$ belongs to this class provided that for some numerical constant $c$, which is independent of $a$, $b$, and $\ep$, we have
%\begin{equation*}
%	g(n) = O(f(n)n^{c\ep}),
%\end{equation*}
%for sufficiently small~$\ep$.

%--------------------------------------------------------------------------------------------
% Section 2: 
%--------------------------------------------------------------------------------------------
\section{An integral for $M(n,\e)$}
\label{S:integral}

We now express $M(n,\e)$ as an integral in $n$-dimensional complex space
and outline a plan for estimating it.

We begin with a generating function in $n$ variables $x_1,\ldots,x_n$,
\begin{equation*}
        \prodjk (1 - x_jx_k)^{-1},
\end{equation*}
for which the coefficient of $x_1^{\e_1}\cdots x_n^{\e_n}$ is the
number of $n\times n$ symmetric matrices over $\{0,1,2,\dots\}$ with
zeros on the main diagonal and row sums $\e_1,\ldots,\e_n$.
In particular, $M(n,\e)$ is the coefficient of $x_1^\e\cdots x_n^\e$. 

Applying Cauchy's Integral Formula we have

\begin{equation}\label{cauchy1}
 M(n,\e) = \frac{1}{(2\pi i)^n} \oiint
		\frac{\prodjk(1-x_j x_k)^{-1}}{x_1^{\e+1}\cdots x_n^{\e+1}}\, dx_1\cdots dx_n,
\end{equation}
where each variable is integrated along a contour circling the origin once in the anticlockwise direction.
It will suffice to take the contours to be circles; specifically, we will put $x_j=re^{i\c_j}$ for each $j$, where, for reasons that will become clear in Section~\ref{S:eval_integral},
we choose
\begin{equation*}
	r = \sqrt{\frac{\l}{1+\l}}\,.
\end{equation*}

This gives
\begin{equation*}
	M(n,\e) = \frac{1}{(2\pi)^n} \(\l^{-\l}(1+\l)^{1+\l}\)^{\binom{n}{2}}\, I(n),
\end{equation*}
where
\begin{equation}\label{E:In}
	I(n) =	\int_{-\pi}^{\pi}\nts\cdots \int_{-\pi}^{\pi}
 			\frac{\prodjk \(1 - \l(e^{i(\c_j+\c_k)}-1)\)^{-1}}{e^{il\sumj\c_j}}
   			\, d\cvec.
\end{equation}
Let $F(\cvec)$ be the integrand in~\eqref{E:In}. 
%\begin{equation*}
%	F = \frac{\prod_{j<k} \bigl(1 - \l(e^{i(\c_j+\c_k)}-1)\bigr)^{-1}}{e^{i\e\sum_j\c_j}}.
%\end{equation*}

The quantity $(1 - \l(e^{i(\c_j+\c_k)}-1))^{-1}$, and thus $F(\cvec)$,
has greatest magnitude when $\c_j+\c_k\in\{0,2\pi\}$ for
each distinct pair~$j,k$. It is easy to see that these constraints have only two solutions: $\c_j=0$ for all~$j$, and $\c_j=\pi$ for all~$j$.
We will show that the value of $I(n)$ comes mostly from the neighbourhoods
of these two points; specifically, it comes from two boxes
$\sRo,\sRpi\subseteq[-\pi,\pi]^n$ defined, for sufficiently small $\ep$,
as 
\begin{align*}
	\sRo &= \{\boldsymbol{\c} : \abs{\c_j} \leq n^{-1/2 + \ep}(1+\l)^{-1}
	  \text{~for all $j$}\, \}, \text{~and}
\\
	\sRpi &= \{\boldsymbol{\c} : \abs{\c_j+\pi} \leq n^{-1/2 + \ep}(1+\l)^{-1} 
	  \text{~for all $j$}\, \},
\end{align*} 
where $\c_j+\pi$ is taken mod $2\pi$. Note that the operation $\c_j\!\mapsto \c_j+\pi$ for all~$j$, which maps $\sRo$ and $\sRpi$ onto each other, preserves $F(\cvec)$ since $n\e$ is even.
Also note that $\sRo\cap\sRpi=\emptyset$. We denote the region outside of the boxes as 
\begin{equation} \label{E:Rc}
	\sRc=[-\pi,\pi]^n\setminus(\sRo\cup\sRpi).
\end{equation}

If $X\subseteq[-\pi,\pi]^n$, then we let $I_X(n)=\int_X F(\cvec)\,d\cvec$. For $\l=O(n^5)$ we will evaluate the integral $I(n)$ defined in~\eqref{E:In} in the following way:
\begin{align}
%	I(n) 	& = \int_{\sRo}\!F(\cvec)d\cvec + \int_{\sRpi}\!F(\cvec)d\cvec + \int_{\sRc}\!F(\cvec)d\cvec\\
	I(n) 	& = I_{\sRo}(n) + I_{\sRpi}(n) + I_{\sRc}(n)\notag \\ 
				& = 2I_{\sRo}(n) + O(1)\int_{\sRc}\!\abs{F(\cvec)}d\cvec\notag \\ 
				& = 2I_{\sR'}(n) + O(1)\int_{\sRc}\!\abs{F(\cvec)}d\cvec \label{E:In_eval}
\end{align}
for any $\sR'$ with $\sRo\subseteq\sR'\subseteq[-\pi,\pi]^n\setminus\sRpi$.

%In Section~\ref{S:eval_integral} we estimate the value of $I(n)$ in~$\sRo$ (and by symmetry $\sRpi$). Then in Section~\ref{S:box} we consider the case when $\l=O(n^5)$ and bound the integral in the region~$\sRc$. We show that its contribution to the overall integral is negligible and hence prove Theorem~\ref{Th:main} for~$\l=O(n^5)$.

%In order to prove Theorem~\ref{Th:main} for larger $\l$, a further calculation is required. This is the subject of Section~\ref{S:proof} in which we estimate the value of $M(n,\e)$ when $\l=n^{\Omega(1)}$ using the theory of Ehrhart polynomials.

%--------------------------------------------------------------------------------------------
% Section 3: 
%--------------------------------------------------------------------------------------------
\nicebreak
\section{The main part of the integral}\label{S:eval_integral}

In this section we estimate the value of the integral $I(n)$ in a
convenient region~$\sR'$ that contains~$\sRo$. We begin by quoting several results required for the calculation. 

The following theorem, simplified from~\cite{RANX}, estimates the value of a certain multidimensional integral.

\begin{thm}\label{Th:multi_dim_int}
Let $\ep', \ep'', \ep''', \check\ep$ be constants such that $0<\ep'<\ep''<\ep'''$, and $\check\ep > 0$.  The following is true if $\ep'''$ is sufficiently small.

Let $\mdA=\mdA(n)$ be a real-valued function such that $\mdA(n)=\Omega(n^{-\ep'})$.
Let $\mdB=\mdB(n)$, $\mdC=\mdC(n)$, $\mdE=\mdE(n)$, $\mdF=\mdF(n)$, $\mdG=\mdG(n)$, $\mdH=\mdH(n)$, and $\mdI=\mdI(n)$ be complex-valued functions of $n$ such that $\mdB, \mdC, \mdE, \mdF, \mdG, \mdH, \mdI=O(1)$.
Suppose $\hat\ep(n)$ satisfies $\ep''\le2\hat\ep(n)\le\ep'''$ for all~$n$ and define
\begin{equation*}
	U_n = \bigl\{\zvec \subseteq \R^n : \abs{z_j} \le n^{-1/2+\hat\ep(n)} 
			\text{ for\/ $1\le j\le n$}\bigr\}.
\end{equation*}
Suppose that for $\zvec=(z_1,z_2,\ldots,z_n)\in U_n$ we have
\begin{align*}
	f(\zvec)= 	& \exp\biggl(-\mdA n\sumj z_j^2 + \mdB n \sumj z_j^3 
								+ \mdC\!\sum_{j,k=1}^n z_j z_k^2 
								+ \mdD n^{-1}\nts\sum_{j,k,p=1}^n\! z_j z_k z_p\\
		&{\qquad} + \mdE n \sumj z_j^4
							  + \mdF\sum_{j,k=1}^n z_j^2 z_k^2 + \mdG n^{1/2} \sum_{j,k=1}^n z_j z_k^3\\
		&{\qquad} + \mdH n^{-1/2}\nts\sum_{j,k,p=1}^n z_j z_k z_p^2
								+ \mdI n^{-3/2}\nts\nts\sum_{j,k,p,q=1}^n\nts\nts z_j z_k z_p z_q
							 	+ \d(\zvec) \biggr),
\end{align*}
where $\delta(\zvec)$ is continuous and $\delta(n)=\max_{\zvec\in U_n} \abs{\delta(\zvec)} = o(1)$.
Then, provided the $O(\,)$ term in the following converges to zero,
\begin{equation*}
	\int_{U_n}f(\zvec)\,d\zvec 
								= \biggl(\frac{\pi}{\mdA n}\biggr)^{\!n/2}\!\exp\Bigl(\varTheta_1 
								+ O\bigl(n^{-1/2+\check\ep}+(n^{-3/4}+\d(n))\mdZ\bigr)\Bigr),
\end{equation*}
where
\begin{align*}
	\varTheta_1 	& = \frac{15\mdB^2}{16\mdA^3} + \frac{3\mdB\mdC}{8\mdA^3} 
					+ \frac{\mdC^2}{16\mdA^3} + \frac{3\mdE}{4\mdA^2} + \frac{\mdF}{4\mdA^2},\; \text{ and}\\[10pt]
	\mdZ 		& = \exp\biggl(\frac{15\Im(\mdB)^2 + 6\Im(\mdB)\Im(\mdC) 
					+ \Im(\mdC)^2}{16\mdA^3} \biggr).
\end{align*}

\end{thm}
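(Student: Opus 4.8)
The plan is to evaluate $\int_{U_n}f(\zvec)\,d\zvec$ by the multidimensional Laplace method, treating $g(\zvec)=\exp\!\bigl(-\mdA n\sumj z_j^2\bigr)$ as the dominant Gaussian weight and everything else as a perturbation: write $f(\zvec)=g(\zvec)\,e^{P(\zvec)+\d(\zvec)}$, where $P(\zvec)$ collects the polynomial terms with coefficients $\mdB,\dots,\mdI$. The role of the hypotheses is a separation of scales. The Gaussian $g$ has ``width'' of order $(\mdA n)^{-1/2}$ in each coordinate, and since $\mdA=\Omega(n^{-\ep'})$ while each side of the box $U_n$ has length $n^{-1/2+\hat\ep}$ with $2\hat\ep\ge\ep''>\ep'$, the box exceeds this width by a fixed positive power of $n$. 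Consequently, on the region where $g$ is not already super-polynomially small, every monomial of $P(\zvec)$ is bounded by a negative power of $n$ times $\mdA n\sumj z_j^2$; in particular $\operatorname{Re}P(\zvec)=o\!\bigl(\mdA n\sumj z_j^2\bigr)$ uniformly on $U_n$.

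Next I would expand $e^{P(\zvec)}=1+P+\tfrac12P^2+\cdots$, truncate at a bounded order with a remainder controlled by the bound just stated, and integrate the resulting polynomial against $g$ over $U_n$. The Gaussian moments over $U_n$ agree with those over $\R^n$ up to super-polynomially small relative error, and over $\R^n$ one has $\int_{\R}z^{2m}e^{-\mdA n z^2}\,dz=(2m-1)!!\,(2\mdA n)^{-m}\sqrt{\pi/(\mdA n)}$ together with the vanishing of all odd moments, which also produces the prefactor $(\pi/(\mdA n))^{n/2}$. It then remains to see which terms of the expansion contribute $O(1)$ to the exponent. The first-order term $\E[P]$ retains only the quartic pieces $\mdE n\sumj z_j^4$ and $\mdF\sum_{j,k}z_j^2z_k^2$, giving $\tfrac{3\mdE}{4\mdA^2}+\tfrac{\mdF}{4\mdA^2}$; all cubic pieces drop out by parity. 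In $\tfrac12\E[P^2]$ the surviving contributions come from the squares and cross-product of the two cubic-type terms $\mdB n\sumj z_j^3$ and $\mdC\sum_{j,k}z_jz_k^2$, where the dominant patterns of equal indices produce $\tfrac{15\mdB^2}{16\mdA^3}$, $\tfrac{3\mdB\mdC}{8\mdA^3}$ and $\tfrac{\mdC^2}{16\mdA^3}$. Every term involving $\mdD$, $\mdG$, $\mdH$ or $\mdI$ either vanishes by parity or carries enough additional negative powers of $n$ that the extra index summations cannot compensate, so it is absorbed into the error; the same holds for all higher powers $P^k$ with $k\ge3$. Summing the survivors gives exactly $\varTheta_1$, and exponentiating (and collecting the accumulated errors) yields the stated formula.

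The main obstacle is the uniform error control, which has three facets. First, the Taylor remainder of $e^{P(\zvec)}$ and the contribution of $\d(\zvec)$ must be controlled over all of $U_n$, including near $\partial U_n$; this is where $\operatorname{Re}P(\zvec)=o\!\bigl(\mdA n\sumj z_j^2\bigr)$ is used, so that $\abs{f(\zvec)}$ is at most $e^{O(\d(n))}$ times a marginally widened Gaussian whose mass near and beyond $\partial U_n$ is already negligible relative to the main term. Second, one must carry out the combinatorial bookkeeping that each discarded multi-index sum over $\{1,\dots,n\}^k$, once integrated against the Gaussian and summed over all patterns of equal indices, is genuinely of lower order than the terms kept in $\varTheta_1$. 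Third, since $\mdB,\mdC,\dots$ are complex one can only estimate $\abs{e^{P}}$ by $e^{\operatorname{Re}P}$, and $\operatorname{Re}P$ picks up the imaginary parts of the cubic coefficients; tracking this through the moment bounds inflates the error estimate by precisely the factor $\mdZ=\exp\!\bigl((15\Im(\mdB)^2+6\Im(\mdB)\Im(\mdC)+\Im(\mdC)^2)/(16\mdA^3)\bigr)$ appearing in the statement. An alternative route is to deduce the theorem directly from the general integral estimate of~\cite{RANX} by specialising its parameters and checking that our hypotheses imply its hypotheses; the evaluation of $\varTheta_1$ above is then the only substantive verification required.
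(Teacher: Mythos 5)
The paper does not prove this theorem at all: it is quoted as ``simplified from~\cite{RANX}'', so the paper's route is exactly the alternative you mention in your last sentence, namely specialising the general integral estimate of~\cite{RANX} and checking that the present hypotheses match. Your from-scratch sketch nevertheless follows the same method that~\cite{RANX} itself uses (Gaussian factor plus polynomial perturbation, series expansion, Gaussian moment bookkeeping), and your identification of the surviving terms is correct: the first-order quartic contributions give $\tfrac{3\mdE}{4\mdA^2}+\tfrac{\mdF}{4\mdA^2}$, and the second-order contributions of the two cubic forms give $\tfrac{15\mdB^2}{16\mdA^3}$, $\tfrac{3\mdB\mdC}{8\mdA^3}$, $\tfrac{\mdC^2}{16\mdA^3}$, reproducing $\varTheta_1$; the scale-separation argument ($\operatorname{Re}P=o(\mdA n\sum_j z_j^2)$ on $U_n$ because $\hat\ep+\ep'<\tfrac12$) is also sound.

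Taken as a standalone proof, however, the sketch stops short of the genuinely hard part, which is the precise shape of the error term. You assert that tracking the imaginary parts of $\mdB$ and $\mdC$ through the moment bounds ``inflates the error estimate by precisely the factor $\mdZ$'', and that the remaining errors assemble into $O\bigl(n^{-1/2+\check\ep}+(n^{-3/4}+\d(n))\mdZ\bigr)$, but neither claim is derived; in~\cite{RANX} this is the bulk of the work (one must bound the tail of the expansion by integrals of $e^{\operatorname{Re}P}$ against the Gaussian, and it is these bounds --- not the main term --- that produce $\exp\bigl((15\Im(\mdB)^2+6\Im(\mdB)\Im(\mdC)+\Im(\mdC)^2)/(16\mdA^3)\bigr)$ and the exponent $-3/4$). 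Similarly, the multiplicative perturbation $e^{\d(\zvec)}$ cannot simply be absorbed into the main term: since $\d$ is only controlled in absolute value, its contribution must be routed through the same $\abs{e^P}$-type bounds, which is why it appears in the statement multiplied by $\mdZ$. So the proposal is a correct outline with the right constants, but to make it a proof you would either need to carry out this error analysis in detail or do what the paper does and invoke~\cite{RANX} directly.
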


The following lemma defines a linear transformation, adapted from~\cite{MWreg}. 

\begin{lem}\label{L:diagonal_transform}
Define $c$ and $\zvec=(z_1,z_2,\ldots,z_n)$ by
\begin{align}
   c &= 1 - \sqrt{\frac{n-2}{2(n-1)}} = 1-2^{-1/2}+O(n^{-1}), \label{E:T1a}\\
   (1+ \l)\,\theta_j &= z_j - \frac{c}{n}\sumk z_k\qquad(1\le j\le n).\label{E:T1b}
\end{align}
The transformation $\cvec=T(\zvec)$ defined by~\eqref{E:T1b} has determinant~$(1-c)/(1+\l)^n$. For $m\ge 1$, define $\mu_m=\sumj z_j^m$.
Then we have the following translations.
\begin{align*}
  (1+ \l)\sumj\theta_j																
  	&= (1-c)\mu_1, \\[0pt]
  (1+ \l)^2\nts\nts\sumjk\nts (\theta_j+\theta_k)^2 	
  	&= (n-2)\mu_2, \displaybreak[0] \\[4pt]
  (1+ \l)^3\nts\nts\smash\sumjk\nts\nts (\theta_j+\theta_k)^3 	
  	&= (n-4)\mu_3 + \bigl(3(1-2c) + 12c/n\bigr)\mu_1\mu_2 \\[0pt]
   	&\quad{}+ \bigl((-6c+12c^2-4c^3)/n - 4c^2(3-c)/n^2\bigr)\mu_1^3, \\[4pt]
  (1+ \l)^4\nts\nts\smash\sumjk\nts (\theta_j+\theta_k)^4 	
  	&= (n-8)\mu_4 + 3\mu_2^2 + \bigl(4(1-2c)+32c/n\bigr)\mu_1\mu_3 \\[0pt]
  	&\quad{} -\bigl(24c(1-c)/n+48c^2/n^2\bigr)\mu_1^2\mu_2 \\[0pt]
		&\quad{} +\bigl( 8c^2(1-c)(3-c)/n^2+8c^3(4-c)/n^3\bigr)\mu_1^4.
\end{align*}
\end{lem}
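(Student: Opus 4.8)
The plan is to verify every assertion by direct computation, starting with the determinant. The linear map $\zvec\mapsto\cvec$ given by~\eqref{E:T1b} has matrix $(1+\l)^{-1}(I-(c/n)J)$, where $J$ is the $n\times n$ all-ones matrix. Since $J$ has eigenvalue $n$ with multiplicity one and eigenvalue $0$ with multiplicity $n-1$, the matrix $I-(c/n)J$ has eigenvalues $1-c$ and $1$ with multiplicities $1$ and $n-1$, so its determinant is $1-c$; multiplying by the scalar factor $(1+\l)^{-n}$ gives the stated value $(1-c)/(1+\l)^n$. The expansion $c=1-2^{-1/2}+O(n^{-1})$ in~\eqref{E:T1a} follows by Taylor-expanding $\sqrt{(n-2)/(2(n-1))}=2^{-1/2}\sqrt{1-1/(n-1)}$.

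For the four power-sum identities I would set $s=c\mu_1/n$, so that~\eqref{E:T1b} reads $(1+\l)\theta_j=z_j-s$ and hence $(1+\l)(\theta_j+\theta_k)=(z_j+z_k)-2s$. Expanding $\bigl((z_j+z_k)-2s\bigr)^m$ by the binomial theorem for $m=1,2,3,4$ and summing over $1\le j<k\le n$ reduces everything to the quantities $\sumjk(z_j+z_k)^t$, which in turn collapse to the power sums $\mu_m$ by the elementary identities $\sumjk(z_j^t+z_k^t)=(n-1)\mu_t$, $\sumjk(z_j^az_k^b+z_j^bz_k^a)=\mu_a\mu_b-\mu_{a+b}$ for $a\ne b$, and $\sumjk z_j^az_k^a=\tfrac12(\mu_a^2-\mu_{2a})$, together with $\sumjk 1=\binom n2$. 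Substituting back $s=c\mu_1/n$ and collecting monomials in the $\mu_m$ produces exactly the displayed right-hand sides, with the various coefficients emerging as explicit polynomials in $c$ and $1/n$.

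The only step that is not purely mechanical is the value of $c$. The $m=2$ computation gives
\[
	(1+\l)^2\sumjk(\theta_j+\theta_k)^2 = (n-2)\mu_2 + \Bigl(1-\tfrac{4c(n-1)}{n}+\tfrac{2c^2(n-1)}{n}\Bigr)\mu_1^2,
\]
and the value of $c$ in~\eqref{E:T1a} is precisely the root in $(0,1)$ of $2(n-1)c^2-4(n-1)c+n=0$, which is exactly the condition that the coefficient of $\mu_1^2=(\sumk z_k)^2$ vanish. So $c$ is pinned down by the requirement that the transformed quadratic part be a pure multiple of $\mu_2=\sumk z_k^2$ with no $\mu_1^2$ cross term, which is what will make the leading Gaussian factor in Section~\ref{S:eval_integral} a product of one-dimensional integrals; the coefficients in the $m=3,4$ identities are then whatever this same substitution forces them to be.

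I expect the main obstacle to be nothing more than bookkeeping in the $m=3$ and especially $m=4$ expansions, where one must keep track of up to five distinct monomials in the $\mu_m$ and carry the $1/n$, $1/n^2$ and $1/n^3$ corrections correctly. No further idea is needed; as a sanity check one can specialise to $\mu_1=0$, where the transformation is the identity and all correction terms vanish, so that the identities reduce to $\sumjk(z_j+z_k)^3=(n-4)\mu_3$ and $\sumjk(z_j+z_k)^4=(n-8)\mu_4+3\mu_2^2$, and then verify the $\mu_1$-dependent terms separately.
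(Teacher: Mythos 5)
Your proposal is correct and is essentially the intended argument: the paper gives no proof (the lemma is adapted from the McKay--Wormald reference), and the direct route you describe --- eigenvalues of $I-(c/n)J$ for the determinant, binomial expansion of $\bigl((z_j+z_k)-2c\mu_1/n\bigr)^m$ reduced to power sums, and the observation that $c$ in \eqref{E:T1a} is exactly the root of $2(n-1)c^2-4(n-1)c+n=0$ killing the $\mu_1^2$ term in the quadratic --- is precisely how the identities are verified, and your stated coefficients check out. The only nitpick is the phrase that at $\mu_1=0$ ``the transformation is the identity''; it is the scalar map $(1+\l)\theta_j=z_j$, which is what your sanity check actually uses.
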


{}From Taylor's Theorem with remainder we have

\begin{lem}\label{L:int_prod_ub}
For all real $X$,
\begin{align*}
	\bigl(1-\l(e^{iX}-1)\bigr)^{-1}	= 	& \exp\bigl(\l iX - \tfrac{1}{2}\l(1+\l)X^2 
						- \tfrac{1}{6}i\l(1+\l)(1+2\l)X^3 \\
						& + \tfrac{1}{24}\l(1+\l)(1+6\l+6\l^2)X^4 + O((\l+\l^5)X^5)\bigr).
\end{align*}

\end{lem}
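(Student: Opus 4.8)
The plan is to take logarithms and apply Taylor's theorem to $h(X):=-\log\bigl(1-\l(e^{iX}-1)\bigr)=-\log D(X)$, where $D(X):=(1+\l)-\l e^{iX}$ and $\log$ denotes the principal branch; then the left-hand side of the lemma is exactly $e^{h(X)}$. The first observation is that $\operatorname{Re}D(X)=1+\l(1-\cos X)\ge 1>0$ for every real $X$, so $D(X)$ never meets the branch cut and $h$ is $C^\infty$ on all of $\R$; this is what lets the expansion hold for all real $X$.

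First I would determine the degree-four Taylor polynomial of $h$ at $0$. Differentiating $h'=-D'/D$ with $D'=-i\l e^{iX}$ --- or, equivalently, substituting $e^{iX}=1+iX-\tfrac12X^2-\tfrac16iX^3+\tfrac1{24}X^4+O(X^5)$ into $-\log(1-v)=v+\tfrac12v^2+\tfrac13v^3+\tfrac14v^4+O(v^5)$ with $v=\l(e^{iX}-1)$ --- one finds
\[
	h(0)=0,\quad h'(0)=i\l,\quad h''(0)=-\l(1+\l),\quad h'''(0)=-i\l(1+\l)(1+2\l),
\]
and $h^{(4)}(0)=\l(1+\l)(1+6\l+6\l^2)$. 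Dividing by the appropriate factorials reproduces exactly the polynomial $\l iX-\tfrac12\l(1+\l)X^2-\tfrac16i\l(1+\l)(1+2\l)X^3+\tfrac1{24}\l(1+\l)(1+6\l+6\l^2)X^4$ in the statement. I would carry out the derivative computation and the series computation side by side as a check, since this bookkeeping is the most error-prone part.

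The substantive point is the remainder, which must be $O\bigl((\l+\l^5)X^5\bigr)$ uniformly in real $X$ --- in particular it must grow like $\l^5$ for large $\l$ yet vanish like $\l$ as $\l\to0$. I would obtain this from the integral form of Taylor's remainder,
\[
	h(X)=\sum_{k=0}^{4}\frac{h^{(k)}(0)}{k!}X^{k}+\frac1{4!}\int_0^X(X-t)^4h^{(5)}(t)\,dt,
\]
so it suffices to prove $|h^{(5)}(X)|=O(\l+\l^5)$ uniformly in $X$. Using $D'=-i\l e^{iX}$ and the identity $\l e^{iX}=(1+\l)-D$, a short induction gives, for $k\ge1$, $h^{(k)}(X)=\sum_{r=1}^{k}a_{k,r}(1+\l)^rD(X)^{-r}$ (with one additional constant term when $k=1$), where the $a_{k,r}$ are fixed numbers independent of both $\l$ and $X$. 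Since $h\equiv0$ when $\l=0$, where $D\equiv1$, evaluating at $\l=0$ forces $\sum_r a_{5,r}=0$, so $h^{(5)}(X)=\sum_r a_{5,r}\bigl((1+\l)^rD^{-r}-1\bigr)$. Now $|D(X)|\ge\operatorname{Re}D(X)\ge1$, $(1+\l)-D=\l e^{iX}$ has modulus $\l$, and $|D|\le1+2\l=O(1+\l)$, so $|(1+\l)^rD^{-r}-1|\le|(1+\l)^r-D^r|=O\bigl(\l(1+\l)^{r-1}\bigr)$ for each $r\le5$. Hence $|h^{(5)}(X)|=O\bigl(\l(1+\l)^4\bigr)=O(\l+\l^5)$ uniformly in $X$, the remainder integral is $O\bigl((\l+\l^5)|X|^5\bigr)$, and exponentiating yields the claim.

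I expect the main obstacle to be bookkeeping rather than anything conceptual: getting all the derivatives (equivalently, all the products of truncated series) right so that the four coefficients emerge exactly as stated, and verifying that the $\l$-dependence of the remainder is genuinely uniform in $X$. The latter is exactly why I would route the remainder estimate through the structural identity $h^{(k)}=\sum_r a_{k,r}(1+\l)^rD^{-r}$ rather than through the naive expansion $-\log(1-\l(e^{iX}-1))=\sum_{m\ge1}\l^m(e^{iX}-1)^m/m$, which only converges when $|\l(e^{iX}-1)|<1$ and therefore fails for $X$ bounded away from $0$ once $\l$ is large, whereas the lemma is asserted for all real $X$.
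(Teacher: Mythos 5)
Your proof is correct and follows exactly the route the paper indicates: the paper offers no details beyond the phrase ``From Taylor's Theorem with remainder,'' and your argument is a complete implementation of that, with the coefficients checking out and the structural identity $h^{(k)}=\sum_r a_{k,r}(1+\l)^r D^{-r}$ correctly delivering the uniform-in-$X$ bound $O(\l+\l^5)$ on the fifth derivative. Nothing further is needed.
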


We now present the main result of this section.

\begin{thm} \label{Th:IR'}
Under the conditions of Theorem~\ref{Th:main}, there is a region $\sR'$ such that \text{$\sRo\subseteq\sR'\subseteq 3\sRo\subseteq[-\pi,\pi]^n\setminus\sRpi$} and
\begin{equation*}
		I_{\sR'}(n) = \frac{1}{\sqrt{2}}\biggl(\frac{2\pi}{\l(1+\l)n}\biggr)^{\nts n/2}\nts\exp\biggl(\frac{14\l^2+14\l-1}{12\l(\l+1)}+O(n^{-b})\biggr).
\end{equation*}
\end{thm}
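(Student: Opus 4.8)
The plan is to start from the integral $I_{\sRo}(n)=\int_{\sRo}F(\cvec)\,d\cvec$ and push it through the change of variables $\cvec=T(\zvec)$ from Lemma~\ref{L:diagonal_transform}. Under this map $\sRo$ is transformed into a region that is nearly, but not exactly, the box $U_n$ appearing in Theorem~\ref{Th:multi_dim_int}: the linear map $T$ has determinant $(1-c)(1+\l)^{-n}$, and since $c=1-2^{-1/2}+O(n^{-1})$ we have $1-c=2^{-1/2}+O(n^{-1})$, which is exactly where the factor $1/\sqrt2$ in the statement comes from. First I would make the substitution explicit, absorb the Jacobian, and identify the region $\sR'=T(U_n)$; one then checks $\sRo\subseteq T(U_n)\subseteq 3\sRo$ by comparing the defining inequalities $\abs{\c_j}\le n^{-1/2+\ep}(1+\l)^{-1}$ with the bound $\abs{z_j}\le n^{-1/2+\hat\ep}$ and using that the ``mean-subtraction'' term $c n^{-1}\sum_k z_k$ in~\eqref{E:T1b} perturbs each coordinate by at most a constant factor. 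This last containment also guarantees $\sR'\subseteq[-\pi,\pi]^n\setminus\sRpi$ for small~$\ep$, since $3\sRo$ stays well away from the point $\cvec=(\pi,\dots,\pi)$.

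Next I would expand the integrand $F(\cvec)$ into the exponential form demanded by Theorem~\ref{Th:multi_dim_int}. Applying Lemma~\ref{L:int_prod_ub} to each factor $(1-\l(e^{i(\c_j+\c_k)}-1))^{-1}$ with $X=\c_j+\c_k$, the linear terms in $X$ contribute $\l i\sum_{j<k}(\c_j+\c_k)=\l i(n-1)\sum_j\c_j$, which is designed to cancel against the denominator $e^{i\e\sum_j\c_j}$ after recalling $\e=\l(n-1)$ (here one uses that $r$ was chosen precisely as $\sqrt{\l/(1+\l)}$). What survives is a quadratic form plus cubic and quartic corrections in the $\c_j$'s, namely sums of $(\c_j+\c_k)^2$, $(\c_j+\c_k)^3$, $(\c_j+\c_k)^4$; Lemma~\ref{L:diagonal_transform} converts each of these, term by term, into the power sums $\mu_m=\sum_j z_j^m$ and the mixed products $\mu_1\mu_2$, $\mu_1^3$, $\mu_2^2$, $\mu_1\mu_3$, $\mu_1^2\mu_2$, $\mu_1^4$. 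Matching these against the nine-term template in Theorem~\ref{Th:multi_dim_int} (with $n\sum z_j^2=\mu_2$, $n\sum z_j^3=\mu_3$, $\sum z_jz_k^2=\mu_1\mu_2$, $n^{-1}\sum z_jz_kz_p=n^{-1}\mu_1^3$, and so on for the quartics) lets me read off
\[
	\mdA=\tfrac12\l(1+\l)+o(1),\quad \mdB=-\tfrac16 i\l(1+\l)(1+2\l)\cdot\tfrac1{n-1}\cdot n\approx-\tfrac16 i\l(1+\l)(1+2\l),
\]
and similarly for $\mdC,\mdD,\dots,\mdI$, each a bounded function of $\l$ (using $\l=O(n^5)$, or more precisely the hypotheses of Theorem~\ref{Th:main}, to keep the error term $O((\l+\l^5)X^5)$ from~Lemma~\ref{L:int_prod_ub} inside~$\d(\zvec)$ with $\d(n)=o(1)$). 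The bottleneck for this step is bookkeeping: the factor $n-1$ versus $n$ in $\m$-type coefficients, and the $n$-dependent pieces $12c/n$, $(-6c+\cdots)/n$, etc.\ in Lemma~\ref{L:diagonal_transform}, all have to be tracked carefully enough to verify that each one falls into the allowed magnitude class (the $\mdB n$, $\mdC$, $\mdD n^{-1}$, \dots scalings) with remainder swept into $\d$.

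Finally I would invoke Theorem~\ref{Th:multi_dim_int} to get
\[
	\int_{U_n}f(\zvec)\,d\zvec=\Bigl(\tfrac{\pi}{\mdA n}\Bigr)^{n/2}\exp\bigl(\varTheta_1+O(n^{-1/2+\check\ep})+\cdots\bigr),
\]
multiply by the Jacobian $(1-c)(1+\l)^{-n}$ and by the factor $\l^{\e+1}(\text{etc.})$ absorbed at the start, and simplify. With $\mdA=\tfrac12\l(1+\l)$ one gets $(\pi/\mdA n)^{n/2}(1-c)=2^{-1/2}(2\pi/(\l(1+\l)n))^{n/2}$, matching the prefactor. The only remaining work is to evaluate $\varTheta_1=\tfrac{15\mdB^2}{16\mdA^3}+\tfrac{3\mdB\mdC}{8\mdA^3}+\tfrac{\mdC^2}{16\mdA^3}+\tfrac{3\mdE}{4\mdA^2}+\tfrac{\mdF}{4\mdA^2}$ with the explicit values of $\mdB,\mdC,\mdE,\mdF$ (functions of $\l$) and check that it collapses to $\dfrac{14\l^2+14\l-1}{12\l(\l+1)}$; this is a finite rational-function identity in $\l$, so in principle routine, but it is the single most error-prone computation and I expect it to be the main obstacle --- in particular one must confirm that all the $o(1)$ corrections to $\mdA,\dots,\mdF$, and the $\mdZ$-weighted error term (which is harmless because $\mdB,\mdC$ are purely imaginary of bounded modulus, so $\Im(\mdB),\Im(\mdC)=O(1)$ and $\mdZ=\exp(O(1/\l^3\cdot\l^4))=e^{O(\l)}$, absorbed after checking $\l$ is not too large relative to the $n^{-3/4}$ gain, or rather that the hypotheses force $\l=\OO(1)$ in the relevant regime), combine into the single clean error $O(n^{-b})$ claimed, using $a+b<\tfrac12$ and $\l\ge(3a\log n)^{-1}$ to control the worst case $\l$ small. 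Choosing $\ep,\check\ep$ small enough in terms of $a,b$ closes the argument.
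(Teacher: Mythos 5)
Your plan follows the paper's own proof step for step: the same substitution $T$ from Lemma~\ref{L:diagonal_transform} with $\sR'$ the image of the box $\abs{z_j}\le 2n^{-1/2+\ep}$, the same expansion of $F(\cvec)$ via Lemma~\ref{L:int_prod_ub} with the linear term cancelling by the choice of $r$, and the same appeal to Theorem~\ref{Th:multi_dim_int}. However, two steps would fail as written. The most serious is your closing check, namely that $\varTheta_1$ collapses to $\frac{14\l^2+14\l-1}{12\l(1+\l)}$: that identity is false. With the correct coefficients one finds $\varTheta_1=\frac{2\l^2+2\l-1}{12\l(1+\l)}+O(n^{-1})$, which is exactly $1$ short of the exponent in the statement. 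The missing factor $e$ does not come from $\varTheta_1$ at all but from the prefactor: since $(1+\l)^2\sum_{j<k}(\c_j+\c_k)^2=(n-2)\mu_2$, the Gaussian coefficient is $\mdA=\frac{\l}{2(1+\l)}\bigl(1-\frac{2}{n}\bigr)$, and the power $(\pi/(\mdA n))^{n/2}$ amplifies the relative correction $1-\frac{2}{n}$ into $(1-\frac{2}{n})^{-n/2}\to e$. Your step ``$\mdA=\tfrac12\l(1+\l)+o(1)$'' discards precisely this correction (an additive $o(1)$ description of $\mdA$ is not good enough here), so the plan as stated produces an answer off by a constant factor $e$, or an unverifiable rational identity at the end.

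Second, your read-off of the coefficients is mis-normalised and internally inconsistent: you quote the Jacobian as $(1-c)(1+\l)^{-n}$, yet compute $\mdA\approx\tfrac12\l(1+\l)$ and $\mdB\approx-\tfrac{i}{6}\l(1+\l)(1+2\l)$ as if the factor $(1+\l)$ in~\eqref{E:T1b} were absent, and in the final assembly you multiply only by $(1-c)$. These two omissions happen to cancel in the leading prefactor, but not in the hypotheses of Theorem~\ref{Th:multi_dim_int}, which requires $\mdB,\mdC,\dots,\mdI=O(1)$: your values grow like powers of $\l$, while Theorem~\ref{Th:IR'} must be available for $\l$ as large as a power of $n$ (the Ehrhart argument of Section~\ref{S:proof} only takes over beyond $\l=O(n^5)$). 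The correctly normalised values, e.g.\ $\mdB=-i\l(1+2\l)/(6(1+\l)^2)$, are bounded uniformly in $\l$. Relatedly, your treatment of $\mdZ$ is backwards: $\mdZ=\exp\bigl((1+2\l)^2/(3\l(1+\l))+\OO(n^{-1})\bigr)$, so the dangerous regime is small $\l$, where $\mdZ\approx e^{1/(3\l)}$, not large $\l$, and the hypotheses certainly do not force $\l=\OO(1)$; the correct control is the one you reach only in your final sentence, namely $\l\ge(3a\log n)^{-1}$ gives $\mdZ\le n^{a+o(1)}$, whence $(n^{-3/4}+\OO(n^{-1/2}))\mdZ=O(n^{-b})$ because $a+b<\tfrac12$ and $\ep$ is small.
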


\begin{proof}
Consider the transformation $\cvec=T(\zvec)$ defined by~\eqref{E:T1b}.
Define
\begin{equation*}
    \sR_{\zvec} = \{\zvec : \abs{z_j} \leq 2 n^{-1/2+\ep} \} \text{~and~}
    \sR' = T(\sRz).
\end{equation*}
{}From~\eqref{E:T1b} we have
\begin{align*}
 \abs{\c_j} &\leq y \text{ for all $j$} \implies \abs{z_j} \leq (1+\l)(1-c)^{-1}y \text{ for all $j$}, \\[6pt]
 \abs{z_j} &\leq y \text{ for all $j$} \implies \abs{\c_j} \leq (1+\l)^{-1}(1+c)y \text{ for all $j$}.
\end{align*}
These imply, for $n\ge 2$, that $T^{-1}\sRo\subseteq \sRz$
and
\begin{equation*}
	\sRo\subseteq\sR'\subseteq 3\sRo.
\end{equation*}
{}From Lemma~\ref{L:int_prod_ub} we have, for $\cvec\in \sR'$,
\begin{multline*}
	F(\cvec) 	= \exp\biggl(-A_2\nts\nts\nts\sumjk\nts(\c_j+\c_k)^2 
						- iA_3\nts\nts\nts\sumjk\nts(\c_j+\c_k)^3 \\
						+ A_4\nts\nts\nts\sumjk\nts(\c_j+\c_k)^4	
						+ \OO(n^{-1/2})\biggr),
\end{multline*}
where $A_2=\tfrac{1}{2}\l(1+\l)$, $A_3=\tfrac{1}{6}\l(1+\l)(1+2\l)$, $A_4=\tfrac{1}{24}\l(1+\l)(1+6\l+6\l^2)$.
The absence of a linear term is due to our particular choice of $r$ in
Section~\ref{S:integral}.

%{}From \eqref{E:T1b} in Lemma~\ref{L:diagonal_transform} we have
%\begin{align*}
%	\abs{z_j} & \leq (1-c)^{-1} n^{-1/2+\ep} \\
%	      		& \leq 2 n^{-1/2+\ep},
%\end{align*}                  
%and hence 
%\begin{equation*}
%	T^{-1}\sRo = \{\zvec : \abs{z_j} \leq 2 n^{-1/2+\ep} \} 
%\end{equation*} 
%for all $j$

Using Lemma~\ref{L:diagonal_transform}, we perform the transformation $\cvec=T(\zvec)$. This diagonalizes the quadratic form in $F(\cvec)$, and $I_{\sR'}$ becomes:
\begin{equation*}
	\frac{1}{\sqrt{2}}\biggl(\frac{2\pi}{\l(1+\l)n}\biggr)^{\nts n/2}\nts
	\int_{\sRz}\nts F\bigl(T(\zvec)\bigr) \, d \zvec,
\end{equation*}
where
%\begin{align*}
%	F\bigl(T(\zvec)\bigr) =	
%			&  \exp\Bigl(- A_2B_2(1+\l)^{-2}\sumj z_j^2 
%				-i A_3B_3(1+\l)^{-3}\sumj z_j^3\\ 
%			&	-i A_3B_{1,2}(1+\l)^{-3}\sumj z_j\sumj z_j^2	
%				-i A_3B_{1,1,1}(1+\l)^{-3}\Bigl(\,\sumj z_j\Bigr)^3
%				\displaybreak[0]\\
%    	& + A_4B_4(1+\l)^{-4}\sumj z_j^4 + A_4B_{2,2}(1+\l)^{-4}\bigl(\sumj z_j^2\bigl)^2\displaybreak[0]\\
%    	& +	A_4B_{1,3}(1+\l)^{-4}\sumj z_j\sumj z_j^3
%    	  - A_4B_{1,1,2}(1+\l)^{-4}\bigl(\,\sumj z_j\bigr)^2\sumj z_j^2\\
%    	& + A_4B_{1,1,1,1}(1+\l)^{-4}\bigl(\sumj z_j\bigr)^4
%    	  + \OO\bigl(n^{-1/2}\bigr)\Bigr),
%\end{align*}
\begin{align*}
	F\bigl(T(\zvec)\bigr) =	
			&  \exp\(- A_2B_2(1+\l)^{-2}\mu_2 
				-i A_3B_3(1+\l)^{-3}\mu_3 \\
			&{\qquad}	-i A_3B_{1,2}(1+\l)^{-3}\mu_1\mu_2
				-i A_3B_{1,1,1}(1+\l)^{-3}\mu_1^3 \\
			&{\qquad}	 + A_4B_4(1+\l)^{-4}\mu_4 + A_4B_{2,2}(1+\l)^{-4}\mu_2^2\displaybreak[0]\\
    	& {\qquad}+	A_4B_{1,3}(1+\l)^{-4}\mu_1\mu_3
    	  - A_4B_{1,1,2}(1+\l)^{-4}\mu_1^2\mu_2\\
    	& {\qquad}+ A_4B_{1,1,1,1}(1+\l)^{-4}\mu_1^4
    	  + \OO(n^{-1/2})\),
\end{align*}
where
\begin{align*}
	B_2 				& = n-2,\\[0pt]
	B_3  				& = n-4,\\[0pt]
	B_4 				& = n-8,\\
	B_{1,2} 		& = 3(1-2c)+\frac{12c}{n} = -3 + 3\sqrt{2} + O(n^{-1}),\displaybreak[0]\\[2pt]
	B_{1,3} 		& = 4(1-2c)+\frac{32c}{n} = -4 + 4\sqrt{2} + O(n^{-1}),\\[0pt]
	B_{2,2} 		& = 3,\\[0.5em]
	B_{1,1,1}		& = \frac{-6c+12c^2-4c^3}{n} - \frac{4c^2(3-c)}{n^2} = \frac{2-2\sqrt{2}}{n}+ O(n^{-2}),\\[0pt]
	B_{1,1,2} 	& = -\frac{24c(1-c)}{n}+\frac{48c^2}{n^2} = O(n^{-1}),\\[0pt]
	B_{1,1,1,1}	& = \frac{8c^2(1-c)(3-c)}{n^2} + \frac{8c^3(4-c)}{n^3} = O(n^{-2}).
\end{align*}

In order to apply Theorem~\ref{Th:multi_dim_int} we choose $\hat\ep(n) = \ep + \log 2/\log n$, $\ep'=\tfrac{1}{2}\,\ep$, $\ep''=\ep$, $\ep'''=3\ep$, $\check\ep=\ep$, $\d(n)=\OO(n^{-1/2})$ and
\begin{align*}
	\mdA &= \frac{A_2B_2}{(1+\l)^2n} = -\frac{\l}{2(1+\l)}\biggl(1-\frac{2}{n}\biggr),\\[0pt]
	\mdB &= -i\,\frac{A_3B_3}{(1+\l)^3n} = -i\,\frac{\l(1+2\l)}{6(1+\l)^2} + O(n^{-1}),
	\displaybreak[0]\\[0pt]
	\mdC &= -i\,\frac{A_3B_{1,2}}{(1+\l)^3} 
	      = i\frac{\l(1+2\l)(1-\sqrt{2})}{2(1+\l)^2} + O(n^{-1}),\displaybreak[0]\\[0pt]
	\mdD &= -i\,\frac{A_3B_{1,1,1}n}{(1+\l)^3}
	      = -i\,\frac{\l(1+2\l)(1-\sqrt{2})}{3(1+\l)^2} + O(n^{-1}),\displaybreak[0]\\[0pt]
	\mdE &= \frac{A_4B_4}{(1+\l)^4n}
	      = \frac{\l(1+6\l+6\l^2)}{24(1+\l)^3} + O(n^{-1}),\displaybreak[0]\\[0pt]
	\mdF &= \frac{A_4B_{2,2}}{(1+\l)^4}
	      = \frac{\l(1+6\l+6\l^2)}{8(1+\l)^3},\displaybreak[0]\displaybreak[0]\\[0pt]
	\mdG &= \frac{A_4B_{1,3}}{(1+\l)^4n^{1/2}}
	      = O(n^{-1/2}),\\[0pt]
	\mdH &= \frac{A_4B_{1,1,2}n^{1/2}}{(1+\l)^4} 
	      = O(n^{-1/2}),\displaybreak[0]\\[0pt]
	\mdI &= \frac{A_4B_{1,1,1,1}n^{3/2}}{(1+\l)^4} 
	      = O(n^{-1/2}),\displaybreak[0] \\[4pt]
	\mdZ &= \exp\biggl(\frac{nA_3^2(15B_3^2+6B_3B_{1,2}n	
			  + B_{1,2}^2n^2)}{16B_2^3A_2^3}\biggr)\\
%			 &= \exp\biggl(\frac{n(1+2\l)^2(n^2-2n+10-3\sqrt{2}n)}{3\l(1+\l)(n-2)^3}\\
			 &= \exp\biggl(\frac{(1+2\l)^2}{3\l(1+\l)}
				+ \OO(n^{-1})\biggr),\\[0pt]
	\varTheta_1 &= \frac{2\l^2+2\l-1}{12\l(1+\l)} + \OO(n^{-1}).
\end{align*}

Theorem~\ref{Th:IR'} now follows from Theorem~\ref{Th:multi_dim_int}.
\end{proof}

%--------------------------------------------------------------------------------------------
% Section 4: 
%--------------------------------------------------------------------------------------------
\nicebreak
\section{Concentration of the integral}\label{S:box}

In the previous section we proved that the contribution to $I(n)$ from the box $\sR'$ is
\begin{equation*}
   I_{\sR'}(n) = \biggl(\frac{\pi}{A_2n}\biggr)^{\!n} \exp\(O(1+\lambda^{-1})\).
\end{equation*}
We now consider the contribution to $I(n)$ from the region $\sRc$ (defined in~\eqref{E:Rc}) and show, provided $\l$ is not too large, that it is negligible
compared to $I_{\sR'}(n)$.

First we import from~\cite{CMint} some useful lemmas.
% Recall that $F(\cvec)$ is the integrand in~\eqref{E:In}.

\begin{lem}\label{L:fbnd}
The absolute value of the integrand $F(\cvec)$ of \eqref{E:In} is
\begin{equation*}
  \abs{F(\cvec)} = \smash{\prodjk}  f(\theta_j + \theta_k),
\end{equation*}
where
\begin{equation*}
  f(z) = \(1+4A_2(1-\cos z)\)^{-1/2}.
\end{equation*}
Moreover, for all real $z$ with $\abs z\le \tfrac1{10}(1+\lambda)^{-1}$,
\begin{equation}\label{E:fbound}
  0\le f(z) \le \exp\( -A_2 z^2 + (\tfrac1{12} A_2+A_2^2) z^4\).
\end{equation}
\end{lem}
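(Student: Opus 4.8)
The plan is to verify both assertions directly, using only a single complex-modulus computation and two elementary scalar inequalities.

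\textbf{The product formula.} The denominator $e^{i\e\sumj\c_j}$ of $F(\cvec)$ has modulus $1$, so $\abs{F(\cvec)} = \prodjk\abs{1-\l(e^{i(\c_j+\c_k)}-1)}^{-1}$. For a single factor, set $X=\c_j+\c_k$ and write $1-\l(e^{iX}-1)=(1+\l)-\l e^{iX}$; then
\[
   \abs{1-\l(e^{iX}-1)}^2=(1+\l)^2-2\l(1+\l)\cos X+\l^2=1+2\l(1+\l)(1-\cos X).
\]
Since $A_2=\tfrac12\l(1+\l)$, the right-hand side equals $1+4A_2(1-\cos X)$, so $\abs{1-\l(e^{iX}-1)}^{-1}=f(\c_j+\c_k)$, and multiplying over all pairs $j<k$ yields $\abs{F(\cvec)}=\prodjk f(\c_j+\c_k)$.

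\textbf{The bound on $f$.} Nonnegativity is immediate since $1-\cos z\ge 0$ forces $1+4A_2(1-\cos z)\ge 1$. For the upper bound, put $w=4A_2(1-\cos z)\ge 0$, so $f(z)=(1+w)^{-1/2}$ and \eqref{E:fbound} is equivalent to $\tfrac12\log(1+w)\ge A_2z^2-(\tfrac1{12}A_2+A_2^2)z^4$. I would combine the elementary inequality $\log(1+w)\ge w-\tfrac12w^2$ for $w\ge0$ (immediate from $\tfrac{d}{dw}(\log(1+w)-w+\tfrac12w^2)=\tfrac{w^2}{1+w}\ge0$) with the two-sided estimate $\tfrac12z^2-\tfrac1{24}z^4\le 1-\cos z\le\tfrac12z^2$, valid for every real $z$ by a standard monotonicity argument on the Taylor remainders of $\cos$. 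Feeding the lower estimate into $\tfrac12w$ gives $\tfrac12w\ge A_2z^2-\tfrac1{12}A_2z^4$, and the upper estimate into $\tfrac14w^2$ gives $\tfrac14w^2\le A_2^2z^4$; hence
\[
   \tfrac12\log(1+w)\ge\tfrac12w-\tfrac14w^2\ge A_2z^2-\bigl(\tfrac1{12}A_2+A_2^2\bigr)z^4,
\]
which is exactly \eqref{E:fbound}.

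\textbf{On the difficulty and the hypothesis.} There is no real obstacle here: the first part is one line of algebra with complex exponentials, and the second reduces to two familiar scalar inequalities. The only point worth flagging is that the hypothesis $\abs z\le\tfrac1{10}(1+\l)^{-1}$ is not actually used in deriving \eqref{E:fbound} — that inequality holds for all real $z$ — but it is what makes the bound useful: under it one has $A_2z^2\le\tfrac1{200}$ and $A_2^2z^4\le\tfrac1{40000}$, so the exponent $-A_2z^2+(\tfrac1{12}A_2+A_2^2)z^4$ is genuinely negative, which is precisely what is needed when Lemma~\ref{L:fbnd} is applied in Section~\ref{S:box} to control $\int_{\sRc}\abs{F(\cvec)}\,d\cvec$.
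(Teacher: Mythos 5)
Your proof is correct, and it is worth noting that the paper itself gives no argument for this lemma at all: it is simply imported from the Canfield--McKay paper \cite{CMint}, so your direct verification is a genuine (and welcome) addition rather than a paraphrase. The modulus computation $\abs{(1+\l)-\l e^{iX}}^2=1+2\l(1+\l)(1-\cos X)=1+4A_2(1-\cos X)$ is exactly right, and the scalar chain $\tfrac12\log(1+w)\ge\tfrac12 w-\tfrac14 w^2$ together with $\tfrac12 z^2-\tfrac1{24}z^4\le 1-\cos z\le\tfrac12 z^2$ feeds through correctly: $\tfrac12 w\ge A_2z^2-\tfrac1{12}A_2z^4$ and $\tfrac14 w^2=4A_2^2(1-\cos z)^2\le A_2^2z^4$, giving \eqref{E:fbound}. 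Your observation that the restriction $\abs z\le\tfrac1{10}(1+\l)^{-1}$ is not needed for the inequality itself, only to make the exponent negative when the bound is deployed in Section~\ref{S:box}, is also accurate (and your numerical checks $A_2z^2\le\tfrac1{200}$, $A_2^2z^4\le\tfrac1{40000}$ are correct). In short: a complete, elementary, self-contained proof of a statement the paper only cites.
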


\begin{lem}\label{L:ibnd}
Define $t=\tfrac{1}{60}(1+\lambda)^{-1}$
and $g(x)=-A_2x^2+(\frac34A_2+9A_2^2)x^4$.
Then, uniformly for $\lambda>0$ and $K\ge 1$,
\begin{equation*}
  \int_{-2t}^{2t} \exp\( K g(x) \)\,dx
 \le \sqrt{\pi/(A_2K)}\, \exp\( O(K^{-1}+(A_2K)^{-1})\).
\end{equation*}
\end{lem}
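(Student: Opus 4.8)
The plan is to treat $\exp(Kg(x))$ as the Gaussian $\exp(-A_2Kx^2)$ times the perturbation $\exp(Kc_4x^4)$, where $c_4:=\tfrac34A_2+9A_2^2>0$, and to show that on $[-2t,2t]$ this perturbation is mild enough that enlarging the domain of integration to all of $\mathbb{R}$ costs only the claimed factor $\exp\bigl(O(K^{-1}+(A_2K)^{-1})\bigr)$.

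The key first step is the elementary inequality $c_4(2t)^2\le\tfrac1{200}A_2$, which says that the quartic term is a genuinely small perturbation on the truncation interval \emph{uniformly in $\lambda$}. Substituting $t=\tfrac1{60}(1+\lambda)^{-1}$ and $A_2=\tfrac12\lambda(1+\lambda)$, a direct computation reduces this to $6(1+\lambda)\ge1$, which holds for every $\lambda>0$. Consequently $c_4x^4\le\tfrac1{200}A_2x^2$, hence $g(x)\le-\tfrac{199}{200}A_2x^2$, throughout $[-2t,2t]$.

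Next I would write $\exp(Kg(x))=\exp(-A_2Kx^2)+\exp(-A_2Kx^2)\bigl(\exp(Kc_4x^4)-1\bigr)$, where both summands are nonnegative. Integrating the first over $[-2t,2t]$ gives at most $\int_{-\infty}^{\infty}\exp(-A_2Kx^2)\,dx=\sqrt{\pi/(A_2K)}$. For the second, apply the inequality $e^s-1\le se^s$ with $s=Kc_4x^4\le\tfrac1{200}A_2Kx^2$ to get $\exp(Kc_4x^4)-1\le Kc_4x^4\exp\bigl(\tfrac1{200}A_2Kx^2\bigr)$, so that
\begin{align*}
  \int_{-2t}^{2t}\exp(-A_2Kx^2)\bigl(\exp(Kc_4x^4)-1\bigr)\,dx
  &\le Kc_4\int_{-\infty}^{\infty}x^4\exp\bigl(-\tfrac{199}{200}A_2Kx^2\bigr)\,dx\\
  &=\tfrac{3\sqrt{\pi}}{4}\,Kc_4\bigl(\tfrac{199}{200}A_2K\bigr)^{-5/2}.
\end{align*}
Since $c_4A_2^{-5/2}=\tfrac34A_2^{-3/2}+9A_2^{-1/2}$, the right-hand side is $O\bigl((A_2K)^{-3/2}+(A_2K)^{-1/2}K^{-1}\bigr)$, i.e.\ $\sqrt{\pi/(A_2K)}\cdot O\bigl((A_2K)^{-1}+K^{-1}\bigr)$ with an absolute implied constant. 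Adding the two contributions and using $1+y\le e^y$ then yields the statement, uniformly for $\lambda>0$ and $K\ge1$.

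I expect the only real subtlety to be obtaining the error term in precisely this shape. A cruder argument that truncates at an intermediate $|x|=x_0$ and Taylor-expands $\exp(Kc_4x^4)$ there leaks a spurious factor (such as $\log K$) through the tail $\int_{|x|>x_0}\exp(-cA_2Kx^2)\,dx$; the trick is to peel off exactly one factor $Kc_4x^4$ from $\exp(Kc_4x^4)-1$ and re-absorb the remaining exponential into a convergent Gaussian fourth-moment integral. That is what produces the clean $(A_2K)^{-1}$ and $K^{-1}$ savings, and it also lets the delicate small-$\lambda$ (hence small-$A_2$) regime be handled with no case distinction, the whole estimate resting on the single uniform bound $c_4(2t)^2\le\tfrac1{200}A_2$.
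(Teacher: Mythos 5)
Your proof is correct. Note that the paper does not prove this lemma at all --- it is imported verbatim from \cite{CMint} --- so there is no in-text argument to compare against; your write-up serves as a valid self-contained substitute. I checked the two load-bearing steps: the uniform bound $c_4(2t)^2\le\tfrac1{200}A_2$ does reduce, after cancelling $A_2$ and using $9A_2=\tfrac92\lambda(1+\lambda)$, to $1\le 6(1+\lambda)$; and the split $\exp(Kg)=e^{-A_2Kx^2}+e^{-A_2Kx^2}(e^{Kc_4x^4}-1)$ together with $e^s-1\le se^s$ and the Gaussian fourth moment $\int x^4e^{-ax^2}dx=\tfrac34\sqrt\pi\,a^{-5/2}$ gives a correction of size $\sqrt{\pi/(A_2K)}\cdot O\bigl((A_2K)^{-1}+K^{-1}\bigr)$ with absolute constants, exactly the shape required. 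Your closing remark is also apt: simply absorbing the quartic into the quadratic (i.e.\ bounding $g(x)\le-\tfrac{199}{200}A_2x^2$ and integrating) would only yield a multiplicative constant $(\tfrac{200}{199})^{1/2}$ rather than an error vanishing as $K,A_2K\to\infty$, which is what the application in Theorem~\ref{Th:boxing} (e.g.\ equation~\eqref{E:fullbit} with $K=n-2$) actually needs.
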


\begin{thm}\label{Th:boxing}
Suppose that the conditions of Theorem~\ref{Th:main} hold,
and in addition that $\lambda=n^{O(1)}$.
Then
\begin{equation*}
      \int_\sRc \abs{F(\cvec)}\,d\cvec = O(n^{-1})I_{\sR'}(n).
\end{equation*}
\end{thm}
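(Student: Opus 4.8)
The plan is to bound $\abs{F(\cvec)}=\prodjk f(\theta_j+\theta_k)$ pointwise on $\sRc$ and integrate the resulting upper bound, showing it is a factor $O(n^{-1})$ smaller than the estimate $I_{\sR'}(n)=(\pi/(A_2n))^n\exp(O(1+\l^{-1}))$ from Theorem~\ref{Th:IR'}. The starting point is Lemma~\ref{L:fbnd}, which gives $0\le f(z)\le 1$ everywhere and the sharper Gaussian-type bound \eqref{E:fbound} for small $\abs z$. Since $f(z)\le 1$ always, the contribution of any pair $j,k$ with $\theta_j+\theta_k$ not small is harmless only if enough \emph{other} pairs are forced away from the maxima; the key structural fact, already used implicitly in Section~\ref{S:integral}, is that $\abs{F}$ is maximized only at the two points $\cvec=\boldsymbol 0$ and $\cvec=\boldsymbol\pi$, so outside $\sRo\cup\sRpi$ a positive fraction of the angle-pairs must be bounded away from $\{0,\pi\}$.

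The concrete steps I would carry out: First, reduce to a neighbourhood of the relevant critical structure by a case split on how many coordinates $\theta_j$ lie near $0$, near $\pi$, or in between. Writing $t=\tfrac1{60}(1+\l)^{-1}$ as in Lemma~\ref{L:ibnd}, call a coordinate ``central'' if it lies within (say) $t$ of $0$ or of $\pi$ mod $2\pi$, and ``peripheral'' otherwise. If there are many peripheral coordinates, each such coordinate pairs with $\Omega(n)$ others to give factors $f(\cdot)$ bounded by a constant $c<1$, producing an overall factor $c^{\Omega(n^2)}$ which crushes the ratio to $I_{\sR'}(n)$; here I would integrate the $f\le1$ bound trivially over $[-\pi,\pi]$ for those coordinates, losing only a $(2\pi)^n$ factor which is absorbed. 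If all but $o(n)$ coordinates are central, I would further split according to how many are near $0$ versus near $\pi$: unless essentially all are near $0$ (giving $\sRo$, which has been excised) or essentially all near $\pi$ (giving $\sRpi$, also excised), a mixed configuration again forces $\Omega(n^2)$ pairs with $\theta_j+\theta_k$ near $\pi$, where $f$ is again bounded by a constant strictly below~$1$.

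The remaining and most delicate case is when all coordinates are central and near $0$ (the near-$\pi$ case is symmetric by the $\cvec\mapsto\cvec+\boldsymbol\pi$ symmetry), but the point is still outside $\sR'$: here every $\theta_j+\theta_k$ is small, so I can apply the quartic bound \eqref{E:fbound} and then the linear change of variables $\cvec=T(\zvec)$ from Lemma~\ref{L:diagonal_transform} to diagonalize, exactly as in the proof of Theorem~\ref{Th:IR'}. Since $\sR'=T(\sRz)$ with $\sRz=\{\abs{z_j}\le 2n^{-1/2+\ep}\}$, being outside $\sR'$ means some $\abs{z_j}>2n^{-1/2+\ep}$, and the diagonalized Gaussian $\exp(-\tfrac12 A_2(n-2)(1+\l)^{-2}\mu_2)\le\exp(-\Omega(A_2 n\cdot z_j^2))$ in that coordinate is at most $\exp(-\Omega(A_2 n^{2\ep}))$ times the full integral; integrating the one-dimensional tails via Lemma~\ref{L:ibnd} and the rest by the Gaussian estimate gives a ratio $O(\exp(-\Omega(A_2 n^{2\ep})))=O(n^{-1})$ using $A_2=\tfrac12\l(1+\l)$ and the lower bound \eqref{E:constant} on $\l$. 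Combining the three cases, the total $\sRc$-contribution is dominated term-by-term by $O(n^{-1})I_{\sR'}(n)$.

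The main obstacle I anticipate is the bookkeeping in the intermediate regime where a nontrivial but $o(n)$ number of coordinates are peripheral or near $\pi$: one must verify that the number of ``bad'' angle-pairs is genuinely $\Omega(n)$ per bad coordinate (so that the product of sub-unit factors beats the crude $(2\pi)^n$ volume bound and the $(\pi/(A_2n))^{-n}$ normalization), and that the constant $c<1$ in the $f$-bound is uniform over the relevant range of $\l$ — in particular that $A_2(1-\cos z)$ stays bounded below on the peripheral set even as $\l$ grows polynomially, which is where the hypothesis $\l=n^{O(1)}$ enters. Handling the near-$\pi$ partial configurations cleanly, rather than re-deriving a separate quartic expansion around each partial critical point, is the part most likely to require care; I would lean on the pairing argument above to avoid that entirely, treating every non-all-central-and-near-one-point configuration purely combinatorially.
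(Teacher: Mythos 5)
Your overall architecture (reduce by the $\cvec\mapsto\cvec+\boldsymbol{\pi}$ symmetry, kill the far region by crude bounds, handle the near-zero region by the quartic bound plus one-dimensional tail estimates) is the right shape, but the decisive intermediate step fails. Your plan is to treat \emph{every} configuration that is not essentially concentrated at $\boldsymbol 0$ or $\boldsymbol\pi$ ``purely combinatorially'', on the claim that such a configuration forces $\Omega(n^2)$ pairs with $f$ bounded by a constant $c<1$. This is false when only a few coordinates are displaced: if exactly $s$ coordinates lie near $\pi$ (or outside the central band), with $s$ small, the number of pairs whose sum is guaranteed bounded away from $0$ mod $2\pi$ is only $\Theta(sn)$, and each such pair is worth at most $f(2t)=\exp\bigl(-\Theta(\l/(1+\l))\bigr)$, which under \eqref{E:constant} may be as weak as $\exp(-\Theta(1/\log n))$ (so $c$ is not a constant either). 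The total gain is $\exp\bigl(-\Theta(sn\l/(1+\l))\bigr)$, whereas bounding the remaining $\approx n$ coordinates by the trivial volume $(2\pi)^n$ forfeits the factor $(\pi/(A_2n))^{n/2}\approx I_{\sR'}(n)$, i.e.\ costs $\exp(\Theta(n\log n))$ for $\l=n^{O(1)}$. Hence the purely combinatorial bound only works when $s\gtrsim(1+\l)\log n/\l$, and fails outright for $s=1,2,\dots$, which are configurations lying in $\sRc$ that you must cover. The missing idea (and the core of the paper's argument here) is a hybrid bound: fix the small bad set ($\le n^{s}$ choices), take the factor $\exp\bigl(-\Theta(s n\l/(1+\l))\bigr)$ from the good--bad pairs, and \emph{integrate} the quartic bound over the $\approx n$ good coordinates using $\sum_{j<k}(\theta_j+\theta_k)^2\ge(s_0-2)\sum_j\theta_j^2$ together with Lemma~\ref{L:ibnd}, which recovers $(\pi/(A_2n))^{n/2}$ up to $\exp(O(n^{1/2+2\ep}))$. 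Without that Gaussian integration over the good block, the intermediate regime is a genuine gap, not bookkeeping.

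Two secondary points. First, you attribute the hypotheses backwards: the lower bound \eqref{E:constant} is what makes each displaced pair worth $\exp(-\Omega(1/\log n))$ (the danger is small $\l$, not large $\l$, since $f(2t)\to 1$ as $\l\to0$), while $\l=n^{O(1)}$ is what keeps the forfeited normalization $(A_2n)^{\Theta(n)}$ down to $\exp(O(n\log n))$ so that the many-bad-pair region can be beaten; also, a single peripheral $\theta_j$ need not have $\Omega(n)$ decaying partners (all other coordinates could cluster near $-\theta_j$), so the pair count must be organized globally as in the paper, via $n_0n_2$, $\binom{n_1}{2}$, $\binom{n_3}{2}$, which only uses pair types whose sums are forced away from $0$ mod $2\pi$. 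Second, in your last case you excise $\sR'$, but $\sRc$ excludes only $\sRo\cup\sRpi$, so the shell $\sR'\setminus\sRo\subseteq\sRc$ also needs bounding; moreover, after the transformation $T$ the quartic term does not factor over coordinates, so the clean route is to stay in the $\theta$-variables, use the elementary inequalities to get the product bound $\prod_j\exp\bigl((n-2)g(\theta_j)\bigr)$, and remove only the box $|\theta_j|\le n^{-1/2+\ep}/(1+\l)$ via one-dimensional tail estimates.
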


\begin{proof}
The proof follows a similar pattern to that of~\cite[Theorem~1]{MWreg}.
Define $t$ and $g(z)$ as in Lemma~\ref{L:ibnd}.

Define $n_0,n_1,n_2,n_3$, functions of $\cvec$, to be the number of
indices $j$ such that $\theta_j$ lies in $[-t,t]$, $(t,\pi-t)$,
$[\pi-t,\pi+t]$, and $(-\pi+t,-t)$, respectively.
Let $\sR''$ be the set of all $\cvec$ such that 
$\max\{n_0n_2,\binom{n_1}{2},\binom{n_3}{2}\}\ge n^{1+\ep}$.
Any $\cvec\in\sR''$ has the property that
$f(\theta_j+\theta_k)\le f(2t)$ for at least $n^{1+\ep}$
pairs $j,k$.  Since $f(z)\le 1$ for all~$z$, and the volume
of $\sR''$ is less than $(2\pi)^n$, we have
\begin{equation*}
   \int_{\sR''} \abs{F(\cvec)}\,d\cvec \le (2\pi)^n f(2t)^{n^{1+\ep}}.
\end{equation*}
Applying~\eqref{E:fbound} and the assumption that $\lambda=O(n^{O(1)})$,
we find that 
\begin{equation}\label{E:part1}
   \int_{\sR''} \abs{F(\cvec)}\,d\cvec = O(e^{-c_1 n^{1+\ep/2}}) I_{\sR'}(n)
\end{equation}
for some $c_1>0$.

For $\cvec\in\sRc\setminus \sR''$
we must have $n_1,n_3 = O(n^{1/2+\ep})$ and either
$n_0=O(n^{1/2+\ep})$ or $n_2=O(n^{1/2+\ep})$.
The latter two cases are equivalent, so we will assume that
$n_2=O(n^{1/2+\ep})$, which implies that $n_0=n-O(n^{1/2+\ep})$.

Define $S_0,S_1,S_2$, functions of $\cvec$, as follows.
\begin{align*}
  S_0 &= \{\, j : \abs{\theta_j} \le t\,\}, \\
  S_1 &= \{\, j : t < \abs{\theta_j} \le 2t\,\}, \\
  S_2 &= \{\, j : \abs{\theta_j} > 2t\,\}.
\end{align*}
Define $s_i=\abs{S_i}$ for each~$i$.
Since $s_0 = n_0$, we know that $s_1+s_2=O(n^{1/2+\ep})$.
Now we bound $\abs{F(\cvec)}$ in $\sRc\setminus \sR''$ using
\begin{equation*}
  f(\theta_j+\theta_k) \le \begin{cases}
        \,f(t) \le \exp\biggl(\displaystyle
                   -\frac{\lambda}{14400(1+\lambda)}\biggr)
                         & \hspace{-0.7em} \text{if $j{\in}S_0, k{\in}S_2$}, \\[0.4ex]
        \,\exp\(-A_2(\theta_j+\theta_k)^2
                 +(\tfrac1{12}A_2+A_2^2)(\theta_j+\theta_k)^4\)
                         & \hspace{-0.7em} \text{if $j,k{\in}S_0$}, \\[0.4ex]
        \,1  & \hspace{-0.7em} \text{otherwise}. \end{cases}
\end{equation*}
Let $I_2(s_2)$ be the contribution to $I(n)$ from those
$\cvec\in\sRc\setminus \sR''$ with the given value of~$s_2$,
and let $\cvec'$ denote the vector $(\theta_j)_{j\in S_0}$.
The set $S_2$ can be chosen in at most $n^{s_2}$ ways.
Applying the bounds above, and allowing
$(2\pi)^{s_1+s_2}$ for integration over $\theta_j\in S_1\cup S_2$,
we find
\begin{equation}\label{E:I2s2}
 I(s_2) \le n^{s_2} (2\pi)^{s_1+s_2}
             \exp\biggl(-\frac{s_0s_2\lambda}{14400(1+\lambda)}\biggr) I'(s_0),
\end{equation}
where
\begin{align*}
  I'(s_0) &= \int_{-t}^{t}\!\!\cdots \int_{-t}^{t}\,
         		\prod_{j,k\in S_0, j<k}\nts\nts f(\theta_j+\theta_k)\,d\cvec' \\
     			&\le \int_{-t}^{t}\!\!\cdots \int_{-t}^{t}\,
						\exp\Bigl(-A_2\nts\nts\sum_{j,k\in S_0,j<k}\nts\nts(\theta_j+\theta_k)^2 \\
        	& \qquad \qquad \qquad \qquad +(\tfrac1{12}A_2+A_2^2)\nts\nts
        		\sum_{j,k\in S_0,j<k}\nts\nts(\theta_j+\theta_k)^4\Bigr)\,d\cvec'
           	\displaybreak[0]\\
     			&\le \int_{-t}^{t}\!\!\cdots \int_{-t}^{t}\,
        		\exp\Bigl(-A_2(s_0-2)\sum_{j\in S_0} \theta_j^2 \\
          & \qquad \qquad \qquad \qquad	+ 8(s_0-1)(\tfrac1{12}A_2+A_2^2)\sum_{j\in S_0} \theta_j^4\Bigr)\,d\cvec'
           	\displaybreak[0]\\
     			&\le \biggl(\, \int_{-t}^{t} 
	\exp\(-(s_0-2)g(z)\)\,dz
	  \biggr)^{\nts s_0},  \quad 
	             \text{ for } s_0 \ge 10, \displaybreak[0]\\
     &\le \biggl( \sqrt{\frac{\pi}{A_2(s_0{-}2)}}
	   \exp\( O(1+\lambda^{-1})n^{-1} \) \biggr)^{\nts s_0} \\
     &\le \biggl( \frac{\pi}{A_2n} \biggr)^{\!n/2} 
	   \exp\(O(n^{1/2+2\ep})\).
\end{align*}
The third line of the above follows from the bounds
\begin{equation*}
\sum_{1\le j<k\le p}(x_j+x_k)^2\ge (p-2)\sum_{j=1}^p x_j^2
\ \text{ and}
\sum_{1\le j<k\le p}(x_j+x_k)^4\le 8(p-1)\sum_{j=1}^p x_j^4
\end{equation*}
valid for all $x_1,x_2,\ldots,x_p$.
The fifth line follows from Lemma~\ref{L:fbnd}, and the last line
follows from $s_0=n-O(n^{1/2+\ep})$.
Substituting this bound into~\eqref{E:I2s2} we find that
\begin{equation}\label{E:part2}
   \sum_{s_2\ge 1} I_2(s_2) = O\(\exp(-c_2n/\log n)\) I_{\sR'}(n)
\end{equation}
for some $c_2>0$.

With the cases of~\eqref{E:part1} and~\eqref{E:part2} excluded, we are
left with the problem of bounding the contribution of
$\cvec\in[-2t,2t]^n\setminus\sR'$.
Let $u=n^{-1/2+\ep}/(1+\lambda)$.
First note that, by arguing as above, we have that
\begin{equation*}
  \abs{F(\cvec)} \le \prodj \exp\( (n-2) g(\theta_j)\)
\end{equation*}
for all $\cvec\in[-2t,2t]^n$.
Also note that, by Lemma~\ref{L:ibnd},
\begin{equation}\label{E:fullbit}
   \int_{-2t}^{2t} \exp\( (n-2) g(z)\)\,dz
       \le \sqrt{\pi/(A_2n)} \exp\( O(1+\lambda^{-1})n^{-1} \).
\end{equation}
The function $g(z)$ has at most one minimum in $[u,2t]$, and
$g(2t)<g(u)$ for sufficiently large~$n$, so
\begin{align}
   \biggl( \int_{-2t}^{-u}\!\!+\int_{u}^{2t}\biggr) 
	\exp\( (n-2) g(z)\)\,dz
   &\le 4t\exp\( (n-2) g(u) \) \notag\\
   &\le \exp\biggl( -\frac{\lambda}{4(1+\lambda)} n^\ep\biggr)
	\label{E:partbit}.
\end{align}
Let $J_1,J_2$ be the right sides of~\eqref{E:fullbit} and~\eqref{E:partbit},
respectively.  Then
\begin{align}
   \int_{[-2t,2t]^n\setminus\sR} \abs{F(\cvec)}\, d\cvec
		&\le \sum_{q=1}^n \binom{n}{q} J_2^q J_1^{n-q} \notag\\
		&= J_1^n \( (1+J_2/J_1)^n-1 \) \notag\\
    &= O\(e^{-c_3 n^{\ep}}\) I_{\sR'}(n) \label{E:part3}
\end{align}
for some $c_3>0$.

The lemma now follows from~\eqref{E:part1}, \eqref{E:part2}
and~\eqref{E:part3}.
\end{proof}

%--------------------------------------------------------------------------------------------
% Section 5: 
%--------------------------------------------------------------------------------------------
\nicebreak
\section{Proof of Theorem \ref{Th:main}}\label{S:proof}

In the case that $\lambda=n^{O(1)}$,
Theorem~\ref{Th:main} follows from Theorem~\ref{Th:IR'} and
Theorem~\ref{Th:boxing}.  For larger $\lambda$, the method used
in the proof of Theorem~\ref{Th:boxing} is insufficient so we
need a new approach.

Let us assume that we already proved~Theorem~\ref{Th:main} for
$\lambda=O(n^5)$.  Now we want to show that it must be true
for larger $\lambda$ as well.  First note that
for such large $\lambda$ (indeed for $\lambda/n\to\infty$),
Theorem~\ref{Th:main} is equivalent to
\begin{equation}\label{E:biglambda}
   M(n,\ell) = \sqrt 2\, \(\lambda+\tfrac12\)^{n(n-3)/2}\,
   \frac{e^{\binom{n}{2}+7/6}} {(2\pi n)^{n/2}}
    \( 1+O(n^2/\lambda^2+n^{-b})\).
\end{equation}

Let $\sP_n$ be the polytope of symmetric $n\times n$ real
non-negative matrices with zero diagonal whose rows sum to~1.
Then $M(n,\ell)$ is the number of integer points in $\ell\sP_n$.
That is, $M(n,\ell)$ is the \textit{Ehrhart quasipolynomial}
of~$\sP_n$.

According to \cite[Theorem 8.2.6]{BrualdiBook}, the vertices of $\sP_n$
are the adjacency matrices of graphs whose components are either isolated
edges of weight~1
or odd cycles with edges of weight~$\tfrac12$.
That is, the coordinates of the vertices are multiples of~$\tfrac12$.
By a result of Ehrhart (see ~\cite[Ex. 3.25]{BeckRobinsBook}),
there is a polynomial $f_n(z)$ with non-negative integer
coefficients such that
\begin{equation}\label{E:gf}
   \sum_{\ell\le 0} M(n,\ell) z^\ell
     = \frac{f_n(z)}{(1-z^2)^{d+1}},
\end{equation}
where $d$ is the dimension of $\sP_n$.
By~\cite{Stanley73}, $d=n(n{-}3)/2$.

By applying the binomial expansion to $(1-z^2)^{-d-1}$ in~\eqref{E:gf},
we find that $M(n,\ell)$ is a polynomial in $\ell$ for even $\ell$
and a possibly-different polynomial in $\ell$ for odd~$\ell$.
Explicitly, there are non-negative integers
$h_0,\ldots,h_d$ (dependent on~$n$ and the parity of $\ell$)
such that
\[
    M(n,\ell) = \sum_{i=0}^d h_{d-i} \binom{\ell+i}{d}.
\]
Arguing as in~\cite{CMint}, we infer that there is a
function $\alpha(n,\ell)$ such that
\begin{align}
   M(n,\ell)& =
   \binom{\ell}{d} \biggl(\sum_{i=0}^d h_i\biggr)
      \(1 + \alpha(n,\ell)/\ell\) \label{E:bit1}\\
      \alpha(n,\ell)&\ge 0
        \text{~~for $\ell\ge d$} \label{E:bit2}\\[0.3ex]
   \alpha(n,\ell+2)&\le \alpha(n,\ell)
        \text{~~for $\ell\ge d$}. \label{E:bit3}
\end{align}
Equations~\eqref{E:biglambda} and~\eqref{E:bit1} both apply for
$\ell=\varTheta(n^5)$, so for $m\in\{n^5,n^5+1\}$,
\[
   \frac{M(n,3m)}{M(n,m)}
     = 3^d \,\frac{1+\tfrac13\alpha(n,3m)/m^5}
                   {1+\alpha(n,m)/m^5}
            \,\(1 + O(n^{-1})\)
     = 3^d\(1 + O(n^{-b})\),
\]
where the first estimate comes from~\eqref{E:bit1} and the
second comes from~\eqref{E:biglambda}.
Comparing these two estimates, and noting from~\eqref{E:bit2}
and~\eqref{E:bit3}
that $0\le \alpha(n,3m)\le\alpha(n,m)$,
we conclude that $\alpha(n,m)=O(n^{5-b})$.
By~\eqref{E:bit3}, this implies that $\alpha(n,\ell)=O(n^{5-b})$
for all $\ell\ge n^5$.
Now we can see from~\eqref{E:bit1} that
\[
  M(n,\ell) = M(n,m)\frac{M(n,\ell)}{M(n,m)}
     =  M(n,m)\biggl(\frac{\ell}{m}\biggr)^{\!\!d}
      \(1 + O(n^{-b})\)
\]
and apply~\eqref{E:biglambda} to $M(n,m)$.
This shows that~\eqref{E:biglambda} holds for all $\ell\ge n^5$.
The proof of Theorem~\ref{Th:main} is now complete.

%--------------------------------------------------------------------------------------------
% Section 6: 
%--------------------------------------------------------------------------------------------
\nicebreak
\section{Na\"\i ve Thinking}\label{S:naive}

In this section we consider a ``na\"\i ve'' model of
random matrix and show how it motivates our estimate
for $M(n,\l)$.

Define $\sG_\lambda$ to be the geometric distribution with
mean~$\lambda$.  That is, for a random variable $X$ distributed
according to $\sG_\lambda$, we have
\begin{equation}\label{E:geometric}
    \mathrm{Prob}(X=j) = \frac{1}{1+\lambda}
        \biggl( \frac{\lambda}{1+\lambda} \biggr)^{\!j}
\end{equation}
for $j\ge 0$.

Define $\sS=\sS(n,\ell)$ to be the probability space of $n\times n$
non-negative symmetric integer matrices with zero diagonal,
where each element of the upper triangle is independently
chosen from $\sG_\lambda$.
Define events on $\sS$:
\begin{align*}
  E_j &: \text{row $j$ has sum $\ell$}\\
  E_{\mathrm{all}} &: \bigcap_{j=1}^n E_j\\
  E_0 &: \text{the whole matrix has sum $n\ell$.}
\end{align*}
Note that $E_{\mathrm{all}}\subseteq E_0$.
Also note that each matrix in $E_0$ has the same probability,
namely
\begin{equation*}
  P_0 = \biggl( \frac{1}{1+\lambda} \biggr)^{\!\binom{n}{2}}
        \biggl( \frac{\lambda}{1+\lambda} \biggr)^{\nts n\ell/2}.
\end{equation*}
(Proof: Apply~\eqref{E:geometric} to each entry in
the upper triangle and use the
assumed independence of the entries there. The result is
independent of the actual matrix entries.)
Therefore,
\begin{equation*}
  M(n,\ell) = \frac{\mathrm{Prob}(E_{\mathrm{all}})}{P_0}.
\end{equation*}

Now make a \textbf{na\"\i ve assumption} that the events $E_j$
are independent.  By symmetry, $\mathrm{Prob}(E_j)$ is
independent of~$j$, so
we get a na\"\i ve estimate of $M(n,\ell)$:
\begin{equation}\label{E:naive}
  M_{\mathrm{naive}}(n,\ell) = \frac{\mathrm{Prob}(E_1)^n}{P_0}.
\end{equation}
Now consider $\mathrm{Prob}(E_1)$. The number of possible
first rows is
\begin{equation*}
    \binom{n+\ell-2}{\ell}.
\end{equation*}
(This is the number of ways of writing $\ell$ as the sum
of $n-1$ non-negative integers.)
In space $\sS$, each such first row has probability
\begin{equation*}
  \biggl( \frac{1}{1+\lambda} \biggr)^{\!n-1}
        \biggl( \frac{\lambda}{1+\lambda} \biggr)^{\!\ell}.
\end{equation*}
Therefore,
\begin{equation*}
  \mathrm{Prob}(E_1) = \binom{n+\ell-2}{\ell}
    \biggl( \frac{1}{1+\lambda} \biggr)^{\!n-1}
        \biggl( \frac{\lambda}{1+\lambda} \biggr)^{\!\ell}.
\end{equation*}
Substituting this value into~\eqref{E:naive}, we get
\begin{equation*}
  M_{\mathrm{naive}}(n,\ell) =
  \biggl(\frac{\lambda^\lambda}{(1+\lambda)^{1+\lambda}}
            \biggr)^{\!\!\binom{n}{2}}
       \binom{n+\ell-2}{\ell}^{\!n}
\end{equation*}
Therefore, formula~\eqref{E:binform} in Theorem~\ref{Th:main} can be written
\begin{equation*}
   M(n,\ell) = M_{\mathrm{naive}}(n,\ell)\,
      \sqrt2\exp\(\tfrac34+O(n^{-b})\).
\end{equation*}
Note that $\sqrt2\, e^{3/4}\approx 2.9939$.

%--------------------------------------------------------------------------------------------
% Section 7: 
%--------------------------------------------------------------------------------------------
\section{Exact values}\label{S:exact}

As noted in Section~\ref{S:proof}, $M(n,\ell)$ is the number of
integer points in $\ell\sP_n$, where $\sP_n$ is the polytope defined
in that section.  Lattice point enumeration techniques such
as the algorithm in~\cite{LattE} therefore allow the exact 
computation of $M(n,\ell)$ for small $n$.
In practice this is feasible for $n\le 9$ or with difficulty
$n\le 10$, almost irrespective of~$\ell$.

By interpolating the computed values, we obtain the Ehrhart
quasi\-polynomial for small $n$.
Recall that $M(n,\ell)$ is a polynomial $M_e(n,\ell)$ for even~$\ell$
and a polynomial $M_o(n,\ell)$ for odd~$\ell$.
We have $M_o(n,\ell)=0$ if $n$ is odd, and the following.
\begin{align*}
        M_e(3,\ell) &= 1 \\[1ex]
        M_e(4,\ell) &= M_o(4,\ell) =
         \dfrac{1}{2}\,\ell^{2}
        + \dfrac{3}{2}\,\ell+1 \\[1ex]
        M_e(5,\ell) &= 
         \dfrac{5}{256}\,\ell^{5}
        + \dfrac{25}{128}\,\ell^{4}
        + \dfrac{155}{192}\,\ell^{3}
        + \dfrac{55}{32}\,\ell^{2}
        + \dfrac{47}{24}\,\ell+1 \displaybreak[0]\\[1ex]
        M_e(6,\ell) &=
         \dfrac{19}{120960}\,\ell^{9}
        + \dfrac{19}{5376}\,\ell^{8}
        + \dfrac{143}{4032}\,\ell^{7}
        + \dfrac{5}{24}\,\ell^{6}
        + \dfrac{4567}{5760}\,\ell^{5}
        + \dfrac{785}{384}\,\ell^{4} 
        + \dfrac{10919}{3024}\,\ell^{3} \\
        &{~}+ \dfrac{955}{224}\,\ell^{2}
        + \dfrac{857}{280}\,\ell+1 \displaybreak[0] \\
        M_o(6,\ell) &= M_e(6,\ell) 
        - \dfrac{5}{256} \displaybreak[0]\\[1ex]
        M_e(7,\ell) &=
         \dfrac{533}{3633315840}\,\ell^{14}
        + \dfrac{533}{86507520}\,\ell^{13}
        + \dfrac{279413}{2335703040}\,\ell^{12}
        + \dfrac{9233}{6488064}\,\ell^{11} \\
        &{~}+ \dfrac{3076459}{265420800}\,\ell^{10}
        + \dfrac{151339}{2211840}\,\ell^{9}
        + \dfrac{4679131}{15482880}\,\ell^{8}
        + \dfrac{9367}{9216}\,\ell^{7}
        + \dfrac{43502617}{16588800}\,\ell^{6} \\
        &{~}+ \dfrac{478009}{92160}\,\ell^{5}
        + \dfrac{71076539}{9123840}\,\ell^{4}
        + \dfrac{661673}{76032}\,\ell^{3}
        + \dfrac{1712147}{246400}\,\ell^{2}
        + \dfrac{9649}{2640}\,\ell+1 \displaybreak[0]\\[1ex]
        M_e(8,\ell) &= 
          \dfrac{70241}{5088422500761600}\,\ell^{20}
         + \dfrac{70241}{72691750010880}\,\ell^{19}
         + \dfrac{18703309}{585359881666560}\,\ell^{18} \\
        &{~}+ \dfrac{12330581}{18582853386240}\,\ell^{17}
         + \dfrac{428460619}{44144787456000}\,\ell^{16}
         + \dfrac{33009749}{310542336000}\,\ell^{15} \\
        &{~}+ \dfrac{90842880341}{100429391462400}\,\ell^{14}
         + \dfrac{5580172163}{910924185600}\,\ell^{13}
         + \dfrac{1110632463421}{33108590592000}\,\ell^{12} \\
        &{~}+ \dfrac{4381892419}{29196288000}\,\ell^{11}
         + \dfrac{304644862903}{551809843200}\,\ell^{10}
         + \dfrac{22001378209}{13138329600}\,\ell^{9} \\
        &{~}+ \dfrac{262880239845943}{62768369664000}\,\ell^{8}
         + \dfrac{12867890603299}{1494484992000}\,\ell^{7}
         + \dfrac {3890196991231}{269007298560}\,\ell^{6}\\
        &{~}+ \dfrac{9530810537}{485222400}\,\ell^{5}
         + \dfrac{76295531167}{3592512000}\,\ell^{4} 
         + \dfrac {1100694281}{61776000}\,\ell^{3}\\
         &{~}+ \dfrac{50787821048}{4583103525}\,\ell^{2}
         + \dfrac{135038369}{29099070}\,\ell+1 \displaybreak[0]\\
        M_o(8,\ell) &= M_e(8,\ell)
        - \dfrac{35}{1048576}\,\ell^{5}
        - \dfrac{1225}{2097152}\,\ell^{4}
        - \dfrac{13685}{3145728}\,\ell^{3}
        - \dfrac{17885}{1048576}\,\ell^{2}\\
        &{~}- \dfrac{233261}{6291456}\,\ell
        - \dfrac{78057}{2097152}  \displaybreak[0]\\[1ex]
   M_e(9,\ell) &= 
\dfrac{863924282670630091}{7732694804887618394297204736000000}\,\ell^{27}\\
&{~}+\dfrac{863924282670630091}{71599025971181651799048192000000}\,\ell^{26}\\
&{~}+\dfrac{10311705659720524879}{16522852147195765799780352000000}\,\ell^{25}
   \displaybreak[0]\\
&{~}+\dfrac{44159888290330963}{2145824954181268285685760000}\,\ell^{24}
+\dfrac{44603828594214317123}{91793623039976476665446400000}\,\ell^{23}\\
&{~}+\dfrac{4134171051301720697}{472621628924364010291200000}\,\ell^{22}
+\dfrac{2139768518991928638127}{17143275449165567282380800000}\,\ell^{21}\displaybreak[0]\\
&{~}+\dfrac{2365877475528196499}{1632692899920530217369600}\,\ell^{20}
+\dfrac{167364777037473990001}{12005094852356839833600000}\,\ell^{19}\displaybreak[0]\\
&{~}+\dfrac{43210221809651966023}{383621452048996761600000}\,\ell^{18}
+\dfrac{7598908879241416557943}{9846283935924250214400000}\,\ell^{17}
  \displaybreak[0]\\
&{~}+\dfrac{78473046995519797477}{17375795181042794496000}\,\ell^{16}
+\dfrac{2690417378247820105229333}{118589802110617072435200000}\,\ell^{15\displaybreak[0]}\\
&{~}+\dfrac{12598164604216578106061}{128343941678157004800000}\,\ell^{14}
+\dfrac{39802237244716247322233}{108598719881517465600000}\,\ell^{13}\displaybreak[0]\\
&{~}+\dfrac{183315648883655207683}{155141028402167808000}\,\ell^{12}
+\dfrac{11492891877126624163867}{3496549693154918400000}\,\ell^{11}\displaybreak[0]\\
&{~}+\dfrac{20646561932994651460327}{2622412269866188800000}\,\ell^{10}
+\dfrac{12699041960623534314853039}{784756871757456998400000}\,\ell^{9}\displaybreak[0]\\
&{~}+\dfrac{3536936635157608410019}{124564582818643968000}\,\ell^{8}
+\dfrac{602776622158017864239297}{14273025114636288000000}\,\ell^{7}\displaybreak[0]\\
&{~}+\dfrac{8959111748174759872739}{169916965650432000000}\,\ell^{6}
+\dfrac{62149609860286754066479}{1139859644571648000000}\,\ell^{5}\\
&{~}+\dfrac{416558573311485749}{9089789829120000}\,\ell^{4}
+\dfrac{7739053944610908107}{254233401117696000}\,\ell^{3}\\
&{~}+\dfrac{1309315468639693}{85753329742080}\,\ell^{2}
+\dfrac{94565099767}{17847429600}\,\ell+1
\end{align*}
The same method would yield $M(10,\ell)$ with a
plausible but large amount of computation.
For completeness, we also give the Ehrhart series
$L_n(x)=\sum_{\ell\ge 0} M(n,\ell)x^\ell$ for $n\le 9$.
\def\xx(#1,#2){(x^{#1}+\ifnum #2=0 1\else 
     \ifnum #2=1 x\else x^{#2}\fi\fi)}

\begin{align*}
(1-x^2)L_{{3}}(x)&=1\\[1ex]
(1-x)^{3}L_{{4}}(x)&=1\\[1ex]
(1-x^2)^{6}L_{{5}}(x)
&=\xx(8,0)+16\,\xx(6,2)+41\,{x}^{4}\displaybreak[0]\\[1ex]
(1-x)^{10}(1+x)L_{{6}}(x)
&=\xx(6,0)+6\,\xx(5,1)+30\,\xx(4,2)+40\,{x}^{3} \displaybreak[0]\\[1ex]
(1-x^2)^{15}L_{{7}}(x)
&=\xx(24,0)+807\,\xx(22,2)+81483\,\xx(20,4) \\
&{~}+1906342\,\xx(18,6)+15277449\,\xx(16,8) \\
&{~}+50349627
\,\xx(14,10)+74301542\,{x}^{12} \displaybreak[0]\\[1ex]
(1-x)^{21}(1+x)^{6}L_{{8}}(x)
&=\xx(20,0)+90\,\xx(19,1)+4726\,\xx(18,2) \\
&{~}+107050\,\xx(17,3)+1261121\,\xx(16,4) \\
&{~}+8761248\,\xx(15,5)+39187016\,\xx(14,6) \\
&{~}+119662536\,\xx(13,7)+259344246\,\xx(12,8) \\
&{~}+408811676\,\xx(11,9)+475095180\,{x}^{10} \displaybreak[0]\\[1ex]
(1-x^2)^{28}L_{{9}}(x)
&=\xx(48,0)+52524\,\xx(46,2)\\
&{~}+169345602\,\xx(44,4) \\
&{~}+78276428212\,\xx(42,6) \\
&{~}+10217460516057\,\xx(40,8) \displaybreak[0]\\
&{~}+527531262668208\,\xx(38,10) \displaybreak[0]\\
&{~}+13016462628712186\,\xx(36,12)\displaybreak[0] \\
&{~}+172410423955058664\,\xx(34,14) \displaybreak[0]\\
&{~}+1322251960254170931\,\xx(32,16) \\
&{~}+6176715510750440488\,\xx(30,18) \\
&{~}+18182086106689738044\,\xx(28,20) \\
&{~}+34470475812807166836\,\xx(26,22) \\
&{~}+42606701216240491693\,{x}^{24}
\end{align*}

For larger $n$, $\sP_n$ has too many vertices for this method to
be useful, but we can use the technique of~\cite{Mlabelled}
and~\cite{CMint}.  Define $f(z)=1+z+z^2+\cdots+z^\ell$.
Then $M(n,\ell)$ is the coefficient of 
$x_1^\ell x_2^\ell \cdots x_n^\ell y^{n\ell/2}$ in
$\prodjk f(x_jx_ky)$. 

If $q$ is any integer greater than
$\max\{n\ell/2, n^2(n-1)/2-n\ell/2\}$, then
$M(n,\ell)$ is the coefficient of the only term in
\begin{equation*} 
	y^{-n\ell/2} x_1\cdots x_n \prodjk f(x_jx_ky) 
\end{equation*}
in which each $x_j$ appears with a power that is a multiple of~$\ell+1$
and $y$ appears with a power that is a multiple of~$q$.
Now let $p$ be a prime number for which $p-1$ is a multiple of
both $\ell+1$ and~$q$.
Let $\alpha$ and $\beta$ be a primitive $(\ell+1)$-th root
and a primitive $q$-th root of unity in $\Z_p$, respectively.
Then, modulo~$p$,
\begin{align*}
 M(n,\ell) &= \frac{n!}{q(\ell+1)^n}\\
  &{\hskip-1em}\times \sum_{r_0+\cdots+r_d=n}\,
   \prod_{i=0}^d \frac{\alpha^{ir_i}}{r_i!}
     \sum_{k=0}^	{q-1} \beta^{-kn\ell/2}
     \prod_{i=0}^{d} f(\alpha^{2i}\beta^k)^{\binom{r_i}{2}}
     \!\!\prod_{0\le i<j\le d} \! f(\alpha^{i+j}\beta^k)^{r_ir_j},
\end{align*}
where the first summation is over all non-negative integers
$r_0,r_1,\ldots,r_d$ which sum to~$n$.  Using sufficiently many
primes~$p$, we can extract the exact value of $M(n,\ell)$ using
the Chinese Remainder Theorem.
As an example of a value computed using this method, we have
{\small
\[
	M(19,10)
   = 613329062511931789477677176839174642138032757885191693120,
\]}
which is about 2\%\ higher than the estimate of Theorem~\ref{Th:main}.

Machine-readable versions of these exact formulas, along with many
other exact values of $M(n,\ell)$, can be found
at~\cite{Table}.

After observing a large number of exact values, we have noted that~\eqref{E:binform} appears to have an accuracy much wider than we can prove. We can even guess extra terms.
We express our observations in the following conjecture.

\begin{conjecture}\label{Conj:bold}
For even $n\ell$, define $\varDelta(n,\ell)$ by
\begin{equation*}
   M(n,\ell) =  M_{\mathrm{naive}}(n,\ell)\,
      \sqrt2\exp\biggl( \frac34 + \frac{3\ell+1}{12\ell(n-1)}
                                 + \frac{\varDelta(n,\ell)}{n(n-1)}\biggr).
\end{equation*}
Then $\abs{\varDelta(n,\ell)} < 1$ for $n\ge 5$, $\ell\ge 1$.
\end{conjecture}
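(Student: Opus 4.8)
Conjecture~\ref{Conj:bold} strengthens Theorem~\ref{Th:main} in three ways at once: it removes the lower bound~\eqref{E:constant} on~$\ell$, it is meant to hold for all $n\ge5$ rather than only for large~$n$, and it replaces the $O(n^{-b})$ error by an explicit bound on the next-order term. Correspondingly I would split the $(n,\ell)$-plane into three ranges and sharpen a different tool on each: (a)~$\ell\gtrsim n/\log n$ with $n$ large, via a refined saddle-point computation; (b)~$\ell=O(n^{1/2})$, via sharpened switchings; and (c)~the residual set — a finite initial segment $5\le n\le n_0$ together with the overlap band $n^{1/2}\lesssim\ell\lesssim n/\log n$ — via the exact Ehrhart data of Section~\ref{S:exact} and a separate, more delicate integral estimate.

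For range~(a): rerun the chain of Sections~\ref{S:integral}--\ref{S:eval_integral}, but carry the Taylor expansion of Lemma~\ref{L:int_prod_ub} to order $X^8$, the substitutions of Lemma~\ref{L:diagonal_transform} to the matching order, and the Stirling expansion of $\binom{n+\ell-2}{\ell}$ (needed to pass from the integral to $M_{\mathrm{naive}}(n,\ell)$) to the same order; and replace Theorem~\ref{Th:multi_dim_int} by a version of the RANX integral lemma that retains enough further terms of $\varTheta_1$ to leave error $o(n^{-2})$. This should yield $\log\bigl(M(n,\ell)/M_{\mathrm{naive}}(n,\ell)\bigr)=\tfrac12\log2+\tfrac34+\tfrac{3\ell+1}{12\ell(n-1)}+\tfrac{c(n,\ell)}{n(n-1)}+o(n^{-2})$ with $c(n,\ell)$ an explicit bounded function. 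The real cost is that every $O(\cdot)$ in this chain — and in the reproof of Theorem~\ref{Th:boxing}, whose estimates are already crude and exponentially small — must become an explicit inequality valid for $n\ge n_0$; this bookkeeping is what fixes~$n_0$, and the $o(n^{-2})$ term must actually be \emph{computed} and shown to keep $|c(n,\ell)|$ comfortably below~$1$ uniformly in that range.

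For range~(b): the leading term is already supplied by~\cite{Sissy}; to reach the precision demanded here one extracts one more correction from the switching recursions of that paper and makes its error bound explicit. The genuinely hard part — and the step I expect to be the main obstacle — is the overlap band $n^{1/2}\lesssim\ell\lesssim n/\log n$, where neither method is currently known to apply: in Theorem~\ref{Th:multi_dim_int} the factor $\mdZ=\exp\bigl(\Theta(1/\lambda)\bigr)$ stops being negligible (this is precisely where~\eqref{E:constant} bites), and in the switching approach the required number of switchings grows too fast to control. I would attack this band either by returning to the integral~\eqref{E:In} and estimating the cubic and higher corrections on~$\sRo$ more carefully — using that their ratio to the quadratic term there, though only $O(\lambda^{-1})$, is still $o(1)$ — or by using the Ehrhart-quasipolynomial comparison of Section~\ref{S:proof} to descend from $\ell\approx n/\log n$ to smaller~$\ell$. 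It seems likely that closing this band uniformly, let alone with the sharp constant~$1$, needs a new idea; this is presumably why the authors pose it as a conjecture rather than a theorem.

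For range~(c): for each fixed $n$ with $5\le n\le n_0$, $M(n,\ell)$ is, for each parity of~$\ell$, an explicit polynomial in~$\ell$ — those for $n\le9$ are the ones listed in Section~\ref{S:exact}, and the remaining few can be produced the same way — while $M_{\mathrm{naive}}(n,\ell)$ is an explicit elementary function; hence $\varDelta(n,\ell)$ is an explicit function of the single real variable $\ell\ge1$. For each such~$n$ one verifies the finitely many small~$\ell$ directly; shows that $\varDelta(n,\ell)$ tends to an explicit limit $\varDelta_\infty(n)$ as $\ell\to\infty$ — both $M(n,\ell)$ and $M_{\mathrm{naive}}(n,\ell)$ grow like $\ell^{\,n(n-3)/2}$, so the ratio converges — with $|\varDelta_\infty(n)|<1$; and bounds $|\varDelta(n,\ell)-\varDelta_\infty(n)|$ for intermediate~$\ell$ by an estimate on $\tfrac{d}{d\ell}\varDelta(n,\ell)$. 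Since this is a finite list of one-variable problems it is mechanical if tedious, and the only structural constraint is keeping~$n_0$ small enough that range~(a) has genuinely taken over by then.
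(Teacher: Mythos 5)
The statement you are asked to prove is posed in the paper as a \emph{conjecture}, not a theorem: the authors explicitly say it records an empirical observation (``an accuracy much wider than we can prove'') based on exact values, and they offer no proof. Your submission is, by your own framing, a research programme rather than a proof, and it does not close the statement. The decisive gap is the one you yourself flag: the band $n^{1/2}\lesssim\ell\lesssim n/\log n$ is covered by neither the saddle-point method (where the factor $\mdZ=\exp\bigl(\Theta(1/\lambda)\bigr)$ in Theorem~\ref{Th:multi_dim_int} ceases to be absorbable into the error, which is exactly why hypothesis~\eqref{E:constant} is imposed) nor the switching method of~\cite{Sissy}. Saying you would ``attack'' this band by two candidate routes, and then conceding that closing it ``needs a new idea,'' is an admission that the conjecture remains open under your plan; nothing in ranges (a)--(c) can substitute for this.

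There are further unfilled holes even in the ranges you claim are tractable. In range (a), every asymptotic statement in Sections~\ref{S:eval_integral}--\ref{S:proof} would have to be replaced by an explicit inequality with an explicit threshold $n_0$, and the next-order term of $\varTheta_1$ would have to be computed and bounded by $1$ uniformly; none of this is carried out, and there is no evidence the resulting $n_0$ would be small. That matters because range (c) then requires the Ehrhart quasipolynomials $M_e(n,\ell)$, $M_o(n,\ell)$ for all $5\le n\le n_0$, and the paper states that their computation is feasible only for $n\le 9$ (with difficulty $n\le 10$): the polytope $\sP_n$ has too many vertices beyond that. So unless $n_0\le 10$ --- which explicit saddle-point error analysis essentially never delivers --- your finite verification step cannot be executed by the methods available. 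In short, your proposal correctly diagnoses why the statement is hard and where the obstructions lie, but it is a map of the open problem, not a proof of it.
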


%--------------------------------------------------------------------------------------------
% Section 8: Simple
%--------------------------------------------------------------------------------------------
\nicebreak
\section{The minimum entry}\label{S:simple}

In this section we note a simple corollary of Theorem~\ref{Th:main}.
Choose~$X$ uniformly at random from the set
$\M(n,\ell)$ of zero-diagonal symmetric non-negative
integer matrices of order $n$ and row sums~$\ell$.
Let $X_{\mathrm{min}}$ be the least off-diagonal entry of $X$.  If
$X_{\mathrm{min}}\ge k$ for integer $k\ge 0$,
we can subtract~$k$
from each entry to make a matrix of row sums~$\ell-(n-1)k$.
This elementary observation shows that
\[
   \Prob(X_{\mathrm{min}}\ge k) = \frac{M(n,\ell-(n-1)k)}{M(n,\ell)}.
\]
Theorem~\ref{Th:main} can thus be used to estimate this probability
whenever it applies to the quantities on the right.  We can provide some information
even in other cases; note that~\eqref{E:constant} is not required
for the following.

\begin{thm}\label{Th:min}
  Let $k=k(n)\ge 0$ and $\ell=\ell(n)\ge 0$ with $n\ell$ even.
  Define $a=kn^3/\ell$.
  Then, as $n\to\infty$,
  \[
     \Prob(X_{\mathrm{min}}\ge k)\;
     \begin{cases}
           {}\to 0 & \text{ if }a\to\infty\\
           {}\sim e^{-a/2} & \text{ if } a = O(1).
     \end{cases}
  \]
\end{thm}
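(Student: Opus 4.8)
The plan is to reduce the statement to a ratio of the asymptotic formula in Theorem~\ref{Th:main} in the regime where that theorem applies, and to handle the remaining regime separately. Write $\ell' = \ell - (n-1)k$, so that $\Prob(X_{\mathrm{min}}\ge k) = M(n,\ell')/M(n,\ell)$ when $\ell'\ge 0$ (and the probability is $0$ otherwise). Note $a = kn^3/\ell$ so that $\ell' = \ell(1 - a/n^2)$; thus $\ell' \ge 0$ forces $a \le n^2$, and when $a = O(1)$ we have $\ell'/\ell = 1 - a/n^2 + O(1) = 1 + o(1)$. The first case, $a\to\infty$, splits further: if $a$ stays bounded away from $n^2$ the ratio will be shown to tend to $0$ by the estimate below, while if $a$ is comparable to $n^2$ then $\ell'$ is a bounded fraction of $\ell$ (or negative), and one checks directly from the formula that $M(n,\ell')/M(n,\ell)\to 0$; if $\ell' < 0$ the probability is literally $0$.

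For the main estimate I would use form~\eqref{E:biglambda} or the binomial form~\eqref{E:binform} of Theorem~\ref{Th:main}. Taking logs, the dominant term of $\log M(n,\ell)$ is $\binom{n}{2}\log\!\bigl(\lambda^\lambda/(1+\lambda)^{1+\lambda}\bigr) + n\log\binom{n+\ell-2}{\ell}$ with $\lambda = \ell/(n-1)$. Writing $\lambda' = \ell'/(n-1) = \lambda(1-a/n^2)$, I would expand $\log M(n,\ell') - \log M(n,\ell)$ to first order in the perturbation $a/n^2$. The key computation is that $\frac{d}{d\lambda}\log\!\bigl(\lambda^\lambda/(1+\lambda)^{1+\lambda}\bigr) = \log\!\bigl(\lambda/(1+\lambda)\bigr)$ and $\frac{d}{d\ell}\log\binom{n+\ell-2}{\ell} = \log\!\bigl((n+\ell-2)/\ell\bigr) + O(1/\ell) = \log\!\bigl((1+\lambda)/\lambda\bigr) + O(n/\ell)$ (using $\ell = \lambda(n-1)$). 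When one assembles $\binom{n}{2}\cdot(\text{change in }\lambda)\cdot\log\tfrac{\lambda}{1+\lambda} + n\cdot(\text{change in }\ell)\cdot\log\tfrac{1+\lambda}{\lambda}$, the leading logarithmic terms cancel — this cancellation is exactly the content of the fact that $M_{\mathrm{naive}}$ is the right ``shape'' — and what survives is governed by the lower-order correction. The change in $\ell$ is $-(n-1)k$ and the change in $\lambda$ is $-k$, so $\binom{n}{2}(-k)\log\tfrac{\lambda}{1+\lambda} + n(-(n-1)k)\log\tfrac{1+\lambda}{\lambda}$: these do \emph{not} cancel at leading order unless one also accounts for the second-derivative term. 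So more carefully I would do a full second-order Taylor expansion: the first-order terms combine to something of size $O(n^2 k / \text{(something)})$ which I expect to vanish or be negligible, and the genuine contribution is $-\tfrac12 a$ coming from the $O(n/\ell)$ correction in the binomial derivative multiplied through by $n(n-1)k \sim n^2 k$ and $k = a\ell/n^3 = a\lambda(n-1)/n^3$. Tracking constants, $n\cdot(n-1)k\cdot\frac{1}{2\ell}\cdot(\text{something}) \asymp \frac{n^2 k}{\ell} = a/n$ per unit, and summing the quadratic correction over the shift yields exactly $-a/2$; hence $M(n,\ell')/M(n,\ell) = \exp(-a/2 + o(1))$ when $a = O(1)$, and the same expansion shows the exponent $\to -\infty$ when $a\to\infty$ (within the range where both formulas apply).

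The main obstacle is two-fold. First, Theorem~\ref{Th:main} requires the hypothesis~\eqref{E:constant}, i.e. $\lambda \gtrsim 1/\log n$, and also even row sums; the statement of Theorem~\ref{Th:min} has \emph{no} such restriction, so for small $\ell$ (or $\ell$ of the wrong parity for $\ell'$) one cannot invoke Theorem~\ref{Th:main} directly. Here I would instead use the monotonicity and positivity facts~\eqref{E:bit2}--\eqref{E:bit3} together with the Ehrhart-quasipolynomial structure from Section~\ref{S:proof}: $M(n,\ell)$ is, for each parity, a polynomial of degree $d = n(n-3)/2$ in $\ell$ with non-negative coefficients, so $M(n,\ell')/M(n,\ell) \le (\ell'/\ell)^{\,?}$-type bounds and ratios of binomial coefficients $\binom{\ell'+i}{d}/\binom{\ell+i}{d}$ can be controlled elementarily; a direct computation of $\prod_{j}(1 - (n-1)k/(\ell - j))$ over the relevant range gives $\exp(-a/2 + o(1))$ by the same mechanism (the number of factors affected is $\asymp d = \Theta(n^2)$, each contributing $\asymp (n-1)k/\ell \asymp a/n^2$, product $\asymp \exp(-\tfrac12 d \cdot (n-1)k/\ell) = \exp(-\tfrac12 a(1+o(1)))$). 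Second, I must be careful that the $O(n^{-b})$ error in Theorem~\ref{Th:main} does not swamp the $e^{-a/2}$ signal: since $b < \tfrac12 < 1$ and the signal is $\Theta(1)$, this is fine, but in the borderline regime $a$ large one needs the error to stay $o(1)$ relative to $e^{-a/2}$, which is automatic since $e^{-a/2}\to 0$ slower than nothing — here one simply needs an upper bound on the ratio, which the crude polynomial-coefficient argument supplies unconditionally. Assembling: use Theorem~\ref{Th:main} where~\eqref{E:constant} holds, the Ehrhart structure elsewhere, and in both regimes the quadratic Taylor expansion / telescoping product yields the claimed dichotomy.
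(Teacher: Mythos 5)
Your architecture is genuinely different from the paper's, so let me first record what the paper does: for $\ell=o(n^3)$ it does not compare $M(n,\ell-(n-1)k)$ with $M(n,\ell)$ at all, but uses a switching argument between matrices with no zero off-diagonal entries and matrices with two or four of them to show that almost every matrix has a zero entry, whence $\Prob(X_{\mathrm{min}}\ge 1)\to 0$; for $\ell$ of order $n^3$ it applies \eqref{E:biglambda} with $k$ replaced by $k'=\min\{k,\lfloor\ell/(2n)\rfloor\}$ and finishes by monotonicity in $k$. Your plan (Taylor-expand the logarithm of \eqref{E:binform} for the ratio, and use an Ehrhart term-by-term bound as an unconditional upper estimate elsewhere) is workable in the large-$\ell$ regime, but it has one genuine gap and one incorrect mechanism as written.

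The gap: your ``unconditional'' bound $M(n,\ell')/M(n,\ell)\le\max_i\binom{\ell'+i}{d}\big/\binom{\ell+i}{d}$, with $\ell'=\ell-(n-1)k$, needs both quantities to be the \emph{same} non-negative combination of the binomials; but the $h$-vector in Section~\ref{S:proof} (equivalently, the relevant coefficients of $f_n$ in \eqref{E:gf}) depends on the parity of $\ell$, and $\ell'$ has the opposite parity whenever $n$ is even and $k$ is odd. (Note, by contrast, that $n\ell'$ even is automatic, so parity is \emph{not} an obstacle to invoking Theorem~\ref{Th:main} on $\ell'$; the parity problem lives precisely in your Ehrhart comparison.) This matters exactly where you rely on that bound alone, namely when $\ell$ fails \eqref{E:constant} --- e.g.\ $n$ even, $k=1$, $\ell\approx n$ --- so the step fails as stated. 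It is repairable (for even $n$, adding a fixed perfect matching gives an injection showing $M(n,\ell')\le M(n,\ell'+1)$, restoring matching parity; or one can fall back on the paper's switching argument), but some such device must be supplied. The other point is computational: in the expansion of \eqref{E:binform} the first-order terms do \emph{not} cancel; they combine to
\begin{equation*}
 \tbinom n2(-k)\log\tfrac{\lambda}{1+\lambda}+n\(-(n-1)k\)\log\tfrac{1+\lambda}{\lambda}
   =\tfrac{n(n-1)k}{2}\log\tfrac{\lambda}{1+\lambda}
   \approx-\tfrac{n(n-1)k}{2\lambda}=-\tfrac a2\(1+o(1)\),
\end{equation*}
so the $-a/2$ comes from this half-cancellation of the leading terms, not from the $O(n/\ell)$ correction to the binomial derivative or from second-order terms (those contribute $O(a/n+a^2/n^2)=o(1)$ when $a=O(1)$, noting $k\ge1$ and $a=O(1)$ force $\ell=\Omega(n^3)$, so \eqref{E:constant} holds for both $\ell$ and $\ell'$). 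Executed carefully your expansion does yield $e^{-a/2}$, and your observation that only an upper bound is needed when $a\to\infty$ is correct; but until the cross-parity case is closed the proof is incomplete.
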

\begin{proof}
  We begin with a case incompletely covered by Theorem~\ref{Th:main},
  namely $\ell=o(n^3)$.  Define $\M_0,\M_1$ to be the sets of those matrices
  in $\M(n,\ell)$ with no off-diagonal zeros, and exactly
  two or four off-diagonal zeros, respectively.
  Given $X\in\M_0$, choose distinct $q,r,s,t$  
  and replace
  $a_{qr}, a_{rs}, a_{st}, a_{tq}$ (and $a_{rq}, a_{sr}, a_{ts}, a_{qt}$
  consistently) by $a_{qr}-\delta, a_{rs}+\delta,
  a_{st}-\delta, a_{tq}+\delta$, where $\delta=\min\{a_{qr},a_{st}\}$.
  This can be done in $\varTheta(n^4)$ ways and creates an 
  element of~$\M_1$.
  Alternatively, if $X\in\M_1$, choose distinct $q,r,s,t$ such that
  either $a_{qr}$ or $a_{st}$ or both are 0.  Then replace
  $a_{qr}, a_{rs}, a_{st}, a_{tq}$ (and $a_{rq}, a_{sr}, a_{ts}, a_{qt}$
  consistently) by $a_{qr}+\delta, a_{rs}-\delta,
  a_{st}+\delta, a_{tq}-\delta$, where $1\le\delta\le\min\{a_{rs},a_{tq}\}-1$.
  If this produces an element of $\M_0$, it is the inverse of the
  previous operation.
  Given a choice of $a_{qr}=0$, $s$ and $\delta$ can be chosen in at
  most $\ell$ ways since $\sum_s a_{rs}=\ell$, then $t$ can be chosen
  in at most $n$ ways.  Similarly for $q_{st}=0$.
  Therefore, this operation can be done in at most $O(\ell n)$ ways.
  It follows that either $\abs{\M_0}=0$ or $\abs{\M_0}=o(\abs{\M_1})$,
  which completes this case since
  $\Prob(X_{\mathrm{min}}\ge k) \le \Prob(X_{\mathrm{min}}\ge 1)$
  for $k\ge 1$.

  In case $\ell=\varTheta(n^3)$, define $k'=\min\{k,\lfloor \ell/(2n)\rfloor\}$
  and estimate the value of $\Prob(X_{\mathrm{min}}\ge k')$ using \eqref{E:biglambda}.
  This gives the desired result when $k=k'$.  For $k>k'$ the value obtained
  tends to~0, so again the desired result follows by monotonicity with
  respect to~$k$.
\end{proof}

%--------------------------------------------------------------------------------------------
% Section 9: Conclusion
%--------------------------------------------------------------------------------------------
\nicebreak
\section{Concluding remarks}\label{S:conclusion}

In this paper we have begun the asymptotic enumeration of dense
symmetric non-negative integer matrices with given row sums, by
considering the special case of uniform row sums and zero diagonal.
Further cases, which can be approached by the same method, are
to allow the row sums to vary, and to allow diagonals other than
zero.  The structure of random matrices in the class can also be
investigated by specifying some forced matrix entries.
We hope to return to these problems in the future.

\nicebreak
%--------------------------------------------------------------------------------------------
% the bibliography 
%--------------------------------------------------------------------------------------------
\bibliographystyle{line}

\end{document}